\documentclass{amsart}

\usepackage[utf8]{inputenc}

\usepackage{amsfonts}
\usepackage{amsmath}
\usepackage{amssymb}
\usepackage{mathtools}
\usepackage{tabularx}

\usepackage{geometry} 
\usepackage{pxfonts}
\usepackage{euscript}
\usepackage{bbold,bbm}
\usepackage{comment}
\usepackage{scalerel}
\usepackage{xcolor}
\usepackage{tikz-cd}

\usepackage{hyperref}
\usepackage{graphics}
\usepackage{epstopdf} 
\usepackage[all,2cell,arrow,matrix,tips]{xy} \UseAllTwocells \SilentMatrices
\usepackage{graphicx}
\usepackage{verbatim}
\usepackage{leftidx}
\usepackage{enumerate}
\usepackage{phonetic}
\usepackage{lipsum}
\usepackage{subfigure}
\usepackage{float}

\usepackage[backend=biber]{biblatex}
\theoremstyle{definition}

\newtheorem{thm}{Theorem}[section]
\newtheorem{prop}[thm]{Proposition}

\newtheorem{lem}[thm]{Lemma}
\newtheorem{rem}[thm]{Remark}

\newcommand\Rb {\mathbb{R}}
\newcommand\Cb {\mathbb{C}}

\newcommand\CE {\EuScript{E}}

\newcommand\CH {\EuScript{H}}

\newcommand{\Cla} {\mathcal{A}}
\newcommand{\Clb} {\mathcal{B}}

\newcommand{\Cll} {\mathcal{L}}
\newcommand{\Clm} {\mathcal{M}}

\newcommand{\Clw} {\mathcal{W}}

\newcommand{\bv} {\mathbf{v}}

\DeclareMathOperator{\id}{id}

\DeclareMathOperator{\diag}{diag}

\DeclareMathOperator{\tr}{tr}

\DeclareMathOperator{\re}{Re}
\DeclareMathOperator{\im}{Im}

\DeclareMathOperator{\Alg}{Alg}
\DeclareMathOperator{\Lat}{Lat}
\DeclarePairedDelimiterX\braket[2]{\langle}{\rangle}{#1 \delimsize\vert #2}

\begin{document}

\title[]{The 2-Norm Distance on the Reflexive Lattice Generated by a Double Triangle Lattice of Projections}
    
\author{Wei Yuan}
\address{Institute of Mathematics, Academy of Mathematics and Systems Science, Chinese Academy of Sciences, Beijing, 100190, China; School of Mathematical Sciences, University of Chinese Academy of Sciences, Beijing 100049, China.}
\email{wyuan@math.ac.cn}
\thanks{Research of the author (W. Yuan) is supported by the NSFC under grant numbers 12471124 and 12571130.}

\subjclass[2020]{Primary 46L54; Secondary 46L10, 47A15, 30F45.}

\date{}

\begin{abstract}
Using operator-valued subordination, we derive a formula for the $2$-norm distance on reflexive lattices generated by double triangles of projections under freeness assumptions on their graph coefficients, and provide explicit evaluations in both the scalar and free-projection cases.
\end{abstract}

\maketitle

\section{Introduction and Preliminaries}

A family of orthogonal projections $\Cll \subset \Clb(\CH)$ is called reflexive if $\Cll = \Lat \Alg \Cll$, where $\Alg \Cll$ denotes the algebra of bounded linear operators that leave invariant the range of every projection $P \in \Cll$, and $\Lat \Alg \Cll$ denotes the family of orthogonal projections onto the invariant subspaces of $\Alg \Cll$. For any set of projections $\Cll$, the lattice $\Lat \Alg \Cll$ is reflexive. 

A double triangle is a five-element projection lattice $\{0, I, P_1, P_2, P_3\}$ in which the three non-trivial projections satisfy the pairwise meet and join relations $P_i \wedge P_j = 0$ and $P_i \vee P_j = I$ for $i \neq j$. In this paper, we study the $2$-norm distance on the reflexive lattice generated by a double triangle of projections satisfying certain freeness conditions in a finite von Neumann algebra.

Throughout the paper, let $\Cla \subset \Clb(\CH)$ be a type II$_1$ factor acting on a complex Hilbert space $\CH$, and let $\Clm = \Cla \otimes M_2(\Cb)$. We denote by $\tau$ the unique faithful normal tracial state on $\Cla$, and define the tracial state $\tau_2 = \tau \otimes \tr$ on $\Clm$, where $\tr$ is the normalized trace on $M_2(\Cb)$. Let $L^2(\Clm, \tau_2)$ denote the completion of $\Clm$ with respect to the $2$-norm $\|A\|_2 = \tau_2(A^*A)^{1/2}$ for $A \in \Clm$. Since $\Clm$ (resp. $\Cla$) is a finite von Neumann algebra, the collection of closed, densely defined operators affiliated with $\Clm$ (resp. $\Cla$) forms a $*$-algebra under the strong sum $A \mathbin{\hat{+}} B$ and strong product $A \mathbin{\hat{\cdot}} B$. In what follows, we suppress the explicit closure notation and write sums and products as if they were operations on ordinary bounded operators.

Let 
\begin{align*}
    Q(\infty) &= 
    \begin{pmatrix}
        I & 0 \\
        0 & 0
    \end{pmatrix}, \\
    Q(0) &= 
    \begin{pmatrix}
        K_1 & \sqrt{K_1(I-K_1)}V_1 \\
        V_1^*\sqrt{K_1(I-K_1)} & V_1^* (I-K_1)V_1
    \end{pmatrix}, \\
    Q(-1) &= 
    \begin{pmatrix}
        K_2 & \sqrt{K_2(I-K_2)}V_2 \\
        V_2^*\sqrt{K_2(I-K_2)} & V_2^* (I-K_2)V_2
    \end{pmatrix}
\end{align*}
be projections in $\Clm$ generating a double triangle lattice, i.e.,
\begin{equation}\label{equ:double_tri}
    Q(z) \wedge Q(w) = 0 \quad \text{and} \quad Q(z) \vee Q(w) = I
\end{equation}
for all distinct $z \neq w$ in $\{0, -1, \infty\}$, where $K_1, K_2 \in \Cla$ are positive contractions satisfying $\ker(I-K_i) = \{0\}$, and $V_1, V_2 \in \Cla$ are unitaries. By Lemma 2.2 in \cite{HY12}, the condition \eqref{equ:double_tri} is equivalent to
\begin{align}\label{equ:double_tri_equv}
    \ker \left (\sqrt{K_1(I-K_1)^{-1}}V_1 - \sqrt{K_2 (I-K_2)^{-1}}V_2 \right)  = 0.
\end{align} 

It was shown in \cite{HY12} that, with $\{0, I\}$ removed, the reflexive lattice 
\begin{align*}
 \Cll:= \Lat \Alg \{Q(\infty), Q(0), Q(-1)\}   
\end{align*}
is homeomorphic to the two-dimensional sphere. More precisely, this homeomorphism identifies $\widehat{\Cb} = \Cb \cup \{\infty\} \cong S^2$ by sending $\infty$ to $Q(\infty)$ and each $z \in \Cb$ to the orthogonal projection $Q(z)$ onto the graph of the densely defined closed operator
\begin{align}\label{equ:Tz_formula}
    T(z) := (1+z)\sqrt{K_1(I-K_1)^{-1}}V_1 - z\sqrt{K_2(I-K_2)^{-1}}V_2,
\end{align}
which is given explicitly by
\begin{align*}
Q(z) = 
\begin{pmatrix}
    I - (I + T(z)T(z)^*)^{-1} & T(z)(I + T(z)^*T(z))^{-1} \\
    (I + T(z)^*T(z))^{-1}T(z)^* & (I + T(z)^*T(z))^{-1}
\end{pmatrix}.
\end{align*}
Moreover, any three distinct nontrivial projections in the lattice determine the same reflexive lattice.

In \cite{Y17}, Wu and Yuan studied the symmetries of the lattice $\Cll$ induced by $*$-automorphisms of the ambient finite factor generated by $\{Q(\infty), Q(0), Q(-1)\}$, proving that the lattice-preserving automorphism group is isomorphic, as a topological group, to a closed subgroup of $\mathrm{SO}(3)$. When $Q(\infty), Q(0)$, and $Q(-1)$ are freely independent, by computing the Fuglede--Kadison determinant of $I - Q(z)Q(w)Q(z)$ for any $z, w \in \widehat{\Cb}$, they determined that the lattice-preserving automorphism group is isomorphic to the symmetric group $S_3$. While the Fuglede--Kadison determinant $\Delta(I - Q(z)Q(w)Q(z))$ captures the geometric mean of the compression $Q(z)(I - Q(w))Q(z)$, the $2$-norm distance
\begin{align*}
  d(z,w) := \|Q(z) - Q(w)\|_2 = \sqrt{2\tau_2\bigl(Q(z)(I - Q(w))Q(z)\bigr)}, \qquad \forall z, w \in \widehat{\Cb},
\end{align*}
is governed by its arithmetic mean. Every symmetry of $\Cll$ induced by a $*$-automorphism of the ambient factor preserves the normalized trace and consequently acts isometrically with respect to this metric. It is therefore natural to seek an explicit description of $d$ in terms of this spherical parametrization. In the free-projection case, adapting the analysis from \cite{Y17} readily yields, for every $z \in \widehat{\mathbb{C}}$,
\begin{align*}
    d(z, 0)^2 &= \frac{|z|}{1 + |z| + |z+1|},\\
    d(z, -1)^2 &= \frac{|z+1|}{1 + |z| + |z+1|},\\
    d(z, \infty)^2 &= \frac{1}{1 + |z| + |z+1|}.
\end{align*}
In particular, 
\begin{align*}
    d(t, 0)^2 &= \frac{1}{2}, \qquad t \in [\infty, -1],\\
    d(t, \infty)^2 &= \frac{1}{2}, \qquad t \in [-1, 0], \\
    d(t, -1)^2 &= \frac{1}{2}, \qquad t \in [0, \infty].
\end{align*}
These constant-distance identities are reminiscent of the boundary arcs of a Reuleaux triangle. The purpose of this paper is to make this geometric picture more precise by deriving an explicit distance formula for arbitrary $z, w \in \Cb \cup \{\infty\}$. Specifically, under a suitable freeness assumption, we establish the following theorem, whose notation will be introduced in the next section.

\begin{thm}\label{thm:gen_formula}
Let $K_j \in \Cla$ be a positive contraction and $V_j \in \Cla$ be a unitary with $\tau(V_j) = 0$, $j = 1, 2$. Assume that $K_1, K_2, V_1, V_2$ are freely independent and satisfy the condition \eqref{equ:double_tri_equv}. For every $z, w \in \Cb$,  
\begin{align*}
    d(z, w) = \left( \frac{1}{\rho(z)} + \frac{1}{\rho(w)} - \frac{\bv^t \Sigma(z,w)\bv}{\rho(z)\rho(w)} \right)^{1/2}
\end{align*}
and 
\begin{align*}
    d(z, \infty) = d(\infty, z) = \left( \frac{1}{\rho(z)} \right)^{1/2},
\end{align*}
where $\bv = (1, 1)^t$, and 
\begin{align*}
   \Sigma(z, w) = \bigl (I + M_1(z,w) \bigr ) \bigl (I - M_2(z,w) M_1(z,w) \bigr )^{-1} \bigl (I + M_2(z, w) \bigr ). 
\end{align*}
\end{thm}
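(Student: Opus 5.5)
We reduce everything to traces of resolvent-type expressions in $\Cla$ and then use operator-valued subordination over $M_2(\Cb)$ to evaluate them. For $z,w\in\Cb$, $\|Q(z)-Q(w)\|_2^2 = \tau_2(Q(z))+\tau_2(Q(w)) - 2\tau_2(Q(z)Q(w))$.

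\textbf{Step 1: the trace of $Q(z)$.} The $(1,1)$ and $(2,2)$ entries of $Q(z)$ give $\tau_2(Q(z))=\tfrac12$, since $\tau((I+TT^*)^{-1})=\tau((I+T^*T)^{-1})$. The case $w=\infty$ comes from
\[
\tau_2\bigl(Q(z)(I-Q(\infty))\bigr)=\tfrac12\,\tau\bigl((I+T(z)^*T(z))^{-1}\bigr).
\]
So the second formula says $1/\rho(z)=\tau((I+T(z)^*T(z))^{-1})$, which we take to be the definition of $\rho$ from the next section. We evaluate it by writing
\[
T(z)=(1+z)A_1V_1-zA_2V_2,\qquad A_j=\sqrt{K_j(I-K_j)^{-1}},
\]
with $A_1V_1$ and $A_2V_2$ free R-diagonal elements, since $\tau(V_j)=0$ and we have freeness. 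The brown-measure / R-diagonal machinery (or a subordination for free sums of R-diagonals) then computes $\tau((I+T^*T)^{-1})$.

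\textbf{Step 2: the cross term.} The $(2,2)$ entries of $Q(z)Q(w)$ and the off-diagonal products lead us to
\[
2\tau_2(Q(z)Q(w))= \tau\bigl(\text{entries}\bigr)= 1-\tau(R_z)-\tau(R_w)+\tau\bigl(R_z(I+T(z)^*T(w))R_w\bigr),
\]
where $R_z=(I+T(z)^*T(z))^{-1}$. This uses the identity that $Q(z)Q(w)$ is expressed via $R_z$, $R_w$ and $I+T(z)^*T(w)$; one checks it by writing the graph projection as $\binom{T}{I}R\,(T^*\ I)$. Hence
\[
d(z,w)^2 = \tau(R_z)+\tau(R_w) - \tau\bigl(R_z(I+T(z)^*T(w))R_w\bigr).
\]
The statement therefore amounts to
\[
\tau\bigl(R_z(I+T(z)^*T(w))R_w\bigr) = \frac{\bv^t\Sigma(z,w)\bv}{\rho(z)\rho(w)}.
\]

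\textbf{Step 3: linearization and subordination.} The key observation is that $T(z)$ and $T(w)$ are both linear combinations of the same two free elements $X_j=A_jV_j$. We linearize the mixed expression as a $2\times 2$ (or $4\times4$) matrix-valued resolvent over $M_2(\Cb)$ whose entries involve $X_1$ and $X_2$. Freeness of $\{K_1,V_1\}$ from $\{K_2,V_2\}$ over the scalars gives freeness with amalgamation over $M_2(\Cb)$ of the two corresponding matrix pieces. Operator-valued subordination (Biane / Belinschi–Mai–Speicher) then expresses the $M_2$-valued Cauchy transform of the sum through subordination functions. Here $M_1(z,w)$ and $M_2(z,w)$ are the $2\times2$ matrices encoding the individual contributions of $X_1$ and $X_2$, which are computable because each $X_j$ is R-diagonal with known $X_j^*X_j=K_j(I-K_j)^{-1}$. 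Solving the subordination fixed point yields a geometric series
\[
\sum_n (M_2M_1)^n,
\]
sandwiched between $(I+M_1)$ and $(I+M_2)$, which is exactly $\Sigma(z,w)$. Pairing with $\bv$ then sums the entries, and the normalizations by $\rho(z)$ and $\rho(w)$ come from the diagonal factors $R_z$ and $R_w$. In more detail, the same subordination gives $\tau(R_z)=1/\rho(z)$, and the mixed term factors as $\tau(R_z)\cdot(\cdots)\cdot\tau(R_w)$ plus the correction series.

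\textbf{Main obstacle.} The main obstacle is Step 3: setting up the correct linearization so that the mixed term $R_z(\cdot)R_w$ appears as an entry of a single operator-valued resolvent, and then verifying that the fixed-point equations close up into the stated form. Invertibility of $I-M_2M_1$ also has to be justified; we would do this by analytic continuation from large imaginary spectral parameter, or by a norm bound $\|M_2M_1\|<1$ coming from the contractivity of the subordination maps. Unbounded $A_j$ should be handled by truncating $K_j$ to $K_j\wedge(1-\epsilon)$ and passing to the limit using normality of $\tau$.
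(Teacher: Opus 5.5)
Your proposal has a genuine gap. The Step 2 reduction is wrong, so the identity you set out to prove is false, and Step 3, which would carry all the content, is only asserted.

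\textbf{Step 2.} Your treatment of $d(z,\infty)$ is fine, because the $(2,2)$ entry of $\CE(iR(z))=\rho(z)^{-1}I$ is $\tau(R_z)$. The cross term is not. Writing $Q(z)=\binom{T_z}{I}R_z\,(T_z^*\;\; I)$ gives $2\tau_2(Q(z)Q(w))=\tau\bigl((I+T_w^*T_z)R_z(I+T_z^*T_w)R_w\bigr)$. Expanding with $T_uR_uT_u^*=I-\tilde R_u$, where $\tilde R_u:=(I+T_uT_u^*)^{-1}$, and with $\tau(\tilde R_u)=\tau(R_u)$, yields
\begin{align*}
d(z,w)^2=\tau(R_z)+\tau(R_w)-\tau\bigl(R_z(I+T_z^*T_w)R_w\bigr)-\tau\bigl(\tilde R_z(I+T_zT_w^*)\tilde R_w\bigr).
\end{align*}
You dropped the last term. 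At $w=z$ your formula gives $d(z,z)^2=\tau(R_z)=1/\rho(z)\neq 0$. There your target $\tau(R_z(I+T_z^*T_w)R_w)=\bv^t\Sigma\bv/(\rho(z)\rho(w))$ reads $1/\rho(z)=2/\rho(z)$, since the theorem forces $\bv^t\Sigma(z,z)\bv=2\rho(z)$. The two mixed traces are exactly the $(2,2)$ and $(1,1)$ entries of $\CE(R(z)^*R(w))$ for the Hermitized resolvent $R(z)=(iI-H(z))^{-1}$. Pairing with $\bv$ sums both of them, which is why the paper keeps both blocks (Lemma \ref{lem:d_for}).

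\textbf{Step 3.} You never produce $M_j(z,w)$. These matrices are built from the numbers $k_j(z)$, which exist because the subordination functions at the single point $B=iI$ are scalar, $\omega_j(iI)=ik_j(z)I$. This is where $\tau(V_j)=0$ enters: it makes $\CE((irI-H_j)^{-1})$ scalar. Nothing in your sketch turns this one-point information into the two-point quantity at $z\neq w$, or shows $\|M_2M_1\|<1$.

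The paper's missing idea is algebraic. Set $Z_j(z)=i\rho(z)(ik_j(z)I-H_j(z))^{-1}-I$, so that $\CE(Z_j)=0$ and $\CE(Z_j^*Z_j)=\beta_jI$. Then $R(z)=\frac{1}{i\rho(z)}\bigl(I+\sum Z_{i_1}(z)\cdots Z_{i_n}(z)\bigr)$, summed over alternating words. The series converges in $L^2$ because $\beta_1\beta_2=1-\rho/(k_1k_2)<1$. Amalgamated freeness makes distinct words orthogonal, and $\CE\bigl(Z_j(z)^*\diag(a,b)Z_j(w)\bigr)=\diag\bigl(M_j(z,w)(a,b)^t\bigr)$ collapses $\tau_2(R(z)^*R(w))$ to $\frac{1}{2\rho(z)\rho(w)}\bv^t(I+M_1)\sum_m(M_2M_1)^m(I+M_2)\bv$. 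Convergence comes from $\|M_j(z,w)\|\le\sqrt{\beta_j(z)\beta_j(w)}$.

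Your linearization idea is not doomed. $R(z)^*R(w)$ is the $(1,2)$ corner of $\bigl(\Lambda-\diag(-H(z),H(w))\bigr)^{-1}$ with $\Lambda=\begin{pmatrix} iI & I\\ 0 & iI\end{pmatrix}$, and $\Lambda$ lies in $\mathbb{H}^+(M_4(\Cb))$, so operator-valued subordination is available there. You would still have to solve for the off-diagonal block of the $M_4(\Cb)$-valued subordination functions and identify it with $\Sigma$, which the proposal does not attempt.

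Finally, $A_jV_j$ need not be R-diagonal when only $\tau(V_j)=0$ is assumed; it fails, for instance, for a symmetry $V_j$. Either avoid R-diagonality, as the paper does, or first replace $V_j$ by $V_jU$, which merely conjugates $Q(z)$ by $\diag(I,U^*)$.
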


The proof of Theorem \ref{thm:gen_formula} centers on evaluating the trace of mixed resolvents of self-adjoint $2 \times 2$ operator matrices whose entries are formed by $K_1, K_2, V_1,$ and $V_2$ (see Lemma \ref{lem:d_for}). The key strategy is to expand these resolvents into series of alternating products of simpler resolvents, each depending solely on either $\{K_1, V_1\}$ or $\{K_2, V_2\}$ (see Lemma \ref{lem:expension}). In this process, the analytic subordination theory for operator-valued free additive convolutions developed by Belinschi, Mai, and Speicher \cite{BMS15} plays a crucial role.

As an application, in Section 3, we evaluate this formula in two specific cases: first, when $K_1 = aI$ and $K_2 = bI$ for some scalars $a, b \in (0, 1)$; and second, when $Q(\infty)$, $Q(0)$, and $Q(-1)$ are freely independent in $\mathcal{M}$. In particular, we prove the following result.

\begin{thm}\label{thm:distance_free}
For the three freely independent projections $Q(0)$, $Q(-1)$, and $Q(\infty)$, the distance $d(z,w)$ is given by
\begin{align*}
 d(z, w) = \left( \frac{2|z-w|^2}{D(z,w)} \right)^{1/2}, \qquad \forall z, w \in \Cb, z \neq w,    
\end{align*}
where 
\begin{align*}
   D(z, w) &:= (|z| + |w|)(|z+1| + |w+1|) + |z-w|^2\\
   & \qquad + (|z||w+1| + |w||z+1|)(|z| + |w| + |z+1| + |w+1|).
\end{align*}
And 
\begin{align*}
    d(z, \infty) = d(\infty, z) =  \left( \frac{1}{1 + |z| + |z+1|} \right)^{1/2}.  
\end{align*}
\end{thm}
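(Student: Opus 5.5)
We plan to deduce Theorem \ref{thm:distance_free} from Theorem \ref{thm:gen_formula} by specializing to the model that realizes three free projections. The first step is to identify this model. For free projections $Q(\infty), Q(0), Q(-1)$ of trace $1/2$ (the trace forced by the double triangle relations in the free case), the compression of $Q(0)$ to the range of $Q(\infty)$ is $K_1$. Free compression theory says $K_1$ has the arcsine law on $[0,1]$, with density $\frac{1}{\pi\sqrt{t(1-t)}}$. The polar part $V_1$ of the off-diagonal corner is a Haar unitary. The same holds for $K_2, V_2$.

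We then need to show that $K_1, K_2, V_1, V_2$ are freely independent. We intend to prove this by the converse construction, following the free-case analysis adapted from \cite{Y17}:
\begin{enumerate}[(i)]
\item Start with free $K_1, K_2$ arcsine and $V_1, V_2$ Haar in $\Cla$.
\item Show that the resulting $Q(\infty), Q(0), Q(-1)$ are free in $\Clm$ with trace $1/2$, for instance by checking that alternating centered words have zero trace, using the Haar rotations $V_j$.
\item Since the joint distribution of three free projections of trace $1/2$ is unique, this model computes $d$ for any free triple.
\end{enumerate}
Condition \eqref{equ:double_tri_equv} holds automatically here, since the relevant operator has no kernel in the free situation, and $\tau(V_j)=0$ holds for Haar unitaries. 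So Theorem \ref{thm:gen_formula} applies.

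The second step is to evaluate the ingredients $\rho(z)$, $M_1(z,w)$, $M_2(z,w)$ of Theorem \ref{thm:gen_formula}. Each depends only on $\{K_1,V_1\}$ or $\{K_2,V_2\}$, and we compute them using the arcsine law. The relevant integrals are $\int_0^1 \frac{dt}{(\alpha+\beta t)\sqrt{t(1-t)}}\,\frac{1}{\pi}$ and their relatives, which give expressions like $1/\sqrt{\alpha(\alpha+\beta)}$. Since $T(z)$ involves $(1+z)$ and $z$, these square roots should produce the moduli $|z|$ and $|z+1|$. As a first check we confirm $\rho(z)=1+|z|+|z+1|$. This gives the formula for $d(z,\infty)$ at once, and it matches the values $d(z,0)^2, d(z,-1)^2, d(z,\infty)^2$ quoted in the introduction, which serve as a consistency test.

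The third and hardest step is the $2\times2$ matrix algebra. We form $\Sigma(z,w)=(I+M_1)(I-M_2M_1)^{-1}(I+M_2)$, compute $\bv^t\Sigma\bv$, and simplify
\[
\frac{1}{\rho(z)}+\frac{1}{\rho(w)}-\frac{\bv^t\Sigma\bv}{\rho(z)\rho(w)}
\]
to $2|z-w|^2/D(z,w)$. We expect $\det(I-M_2M_1)$ to be proportional to $D(z,w)$, up to factors of $\rho(z)\rho(w)$. To keep this manageable, we will:
\begin{enumerate}[(a)]
\item write $M_j$ in terms of the four quantities $|z|,|w|,|z+1|,|w+1|$ together with mixed products;
\item exploit the symmetry $z\leftrightarrow w$ and the $S_3$ symmetry permuting $0,-1,\infty$ to reduce the number of independent terms;
\item verify the final identity on the special arcs $t\in[-1,0]$ and similar ones, where the constant value $1/2$ is known.
\end{enumerate}
The main obstacle is this algebraic simplification, including the branch choices of square roots that produce the absolute values. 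Our first approach is to check it symbolically after parametrizing $z, w$ by their moduli data.
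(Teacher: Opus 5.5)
Your plan is correct and is essentially the paper's proof. The paper also specializes Theorem \ref{thm:gen_formula} to the model of \cite[Section 3]{Y17}, with free arcsine $K_j$ (equivalently half-Cauchy $Y_j$, via $t=\sqrt{s/(1-s)}$) and free Haar $V_j$, and finds $k_1(z)=1+|z|$, $k_2(z)=1+|z+1|$, $\rho(z)=1+|z|+|z+1|$; your opening claim that $V_1$ is Haar in every realization is false, since conjugating by $\diag(I,U)$ with $U=V_1^*$ gives $V_1=I$, but your model-plus-uniqueness step does not need it. For step 3 the paper avoids brute force by observing that each $M_j(z,w)$ is a multiple of a rank-one projection, so $M_1M_2M_1=\kappa M_1$, $M_2M_1M_2=\kappa M_2$ with $\kappa=2\tr(M_1M_2)$, hence $\det(I-M_2M_1)=1-\kappa=D(z,w)/(l_1l_2)$ where $l_j=|c_j(z)|k_j(w)+|c_j(w)|k_j(z)$ (the factor is $l_1l_2$, not $\rho(z)\rho(w)$), and the simplification then needs only the polarization identities together with $2\re\bigl((1+z)(1+\overline{w})\overline{z}w\bigr)=|z|^2|w+1|^2+|w|^2|z+1|^2-|z-w|^2$.
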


This explicit distance formula reveals that, in the free projection case, the lattice sphere lies on the sphere in $L^2(\mathcal{M})$ of radius $1/\sqrt{6}$ centered at
\begin{align*}
    \frac{1}{3}\bigl(Q(0) + Q(-1) + Q(\infty)\bigr).
\end{align*}
Furthermore, the infinitesimal Riemannian metric induced by $d$ has constant Gaussian curvature $1$ with conical singularities at $0, -1,$ and $\infty$, each having cone angle $\pi$. Combined with the existence and uniqueness results for spherical metrics with prescribed conical singularities (\cite{E04, T91, LT92}), our calculation demonstrates that the classical spherical surface obtained by gluing two spherical octants can be realized isometrically by the reflexive lattice generated by three freely independent projections of trace $\frac{1}{2}$.

\section{The proof of Theorem \ref{thm:gen_formula}}
A family of von Neumann subalgebras $\Cla_1, \ldots, \Cla_m \subset \Cla$ is called freely independent if
\begin{align*}
    \tau(A_1 \cdots A_n) = 0
\end{align*}
whenever $A_k \in \Cla_{i_k}$ satisfies $\tau(A_k) = 0$, with $i_k \ne i_{k+1}$ for all $1 \leq k \leq n-1$ (see, e.g., \cite{V85, VDN92}). A collection of operators is said to be freely independent if the von Neumann algebras they generate are freely independent.

Throughout the rest of the paper, we assume that $V_1, V_2$ are unitaries with 
\begin{align*}
  \tau(V_1) = \tau(V_2) = 0,  
\end{align*}
and that $K_1, K_2 \in \Cla$ are positive contractions such that $\ker(I-K_1) = \ker(I-K_2) = \{0\}$, and
\begin{align*}
    \ker \left( \sqrt{K_1(I-K_1)^{-1}}V_1 - \sqrt{K_2 (I-K_2)^{-1}}V_2 \right) = \{0\}.
\end{align*}

\begin{rem}
Recall that a unitary $V \in \Cla$ is called a Haar unitary if $\tau(V^n) = 0$ for all $n \in \mathbb{Z} \setminus \{0\}$. Let $V_1, V_2, U$ be three freely independent unitaries in $\Cla$ with $\tau(V_1) = \tau(V_2) = \tau(U) = 0$. Then $V_1U, V_2U$ are freely independent Haar unitaries. It is easy to see that replacing $V_1$ and $V_2$ by $V_1 U$ and $V_2 U$ does not alter the value of the distance $d(z, w)$. Consequently, although we will not rely on this fact in what follows, we may always assume that $V_1$ and $V_2$ are Haar unitaries.  
\end{rem}

To simplify notation in the following discussion, we introduce the shorthand
\begin{align*}
    Y_j & := \sqrt{K_j(I-K_j)^{-1}},  & X_j & := Y_jV_j,\\
    c_1(z) & := 1+z, & c_2(z) & :=-z. 
\end{align*}
Note that $Y_j$ is a positive densely defined closed operator affilated with $\Cla$. We use $\mu_j$ to denote the distribution of $Y_j$, i,e., 
\begin{align*}
 \tau(f(Y_j)) = \int f(t) d\mu_j(t),   
\end{align*}
for every bounded Borel function on $[0, \infty)$.

For every $z \in \Cb$, let 
\begin{align*}
 H_j(z) =
 \begin{pmatrix}
    0 & c_j(z)X_j  \\
    \overline{c_j(z)}X_j^* & 0
\end{pmatrix}
\end{align*}
and 
\begin{align*}
  H(z) &= H_1(z)+H_2(z), \\
  R(z) &= \bigl (iI-H(z) \bigr )^{-1}.
\end{align*}

\begin{lem}\label{lem:d_for}
For $z, w \in \Cb$, we have $d(z, \infty)^2 = \re \bigl ( \tau_2(iR(z)) \bigr )$, and 
\begin{align*}
d(z,w)^2 &= \re \bigl(\tau_2(iR(z)) - \tau_2(iR(w)^*) - 2\tau_2(R(z)^*R(w)) \bigr ).
\end{align*}
\end{lem}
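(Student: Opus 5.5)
The plan is to express everything through the graph operator $T=T(z)$. By \eqref{equ:Tz_formula} and the shorthand above, $T(z)=c_1(z)X_1+c_2(z)X_2$, so
\begin{align*}
H(z)=\begin{pmatrix}0 & T(z)\\ T(z)^* & 0\end{pmatrix}.
\end{align*}
First I compute $R(z)$ explicitly. Since $(iI-H)(-iI-H)=I+H^2$ and $H^2=\diag(TT^*,T^*T)$, we get $R=(-iI-H)(I+H^2)^{-1}$. Put $A=(I+TT^*)^{-1}$, $C=(I+T^*T)^{-1}$ and $B=TC=AT$, all bounded elements of $\Cla$. Then
\begin{align*}
iR(z)=\begin{pmatrix}A & -iB\\ -iB^* & C\end{pmatrix},\qquad
Q(z)=\begin{pmatrix}I-A & B\\ B^* & C\end{pmatrix}.
\end{align*}
Hence
\begin{align*}
Q(z)=P_\infty+\diag(-A,C)-\tfrac12\bigl(R(z)+R(z)^*\bigr),
\end{align*}
where $P_\infty=Q(\infty)$ and $\diag(A,C)=\re(iR(z))$.

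Next I record two trace facts. The first is $\tau(A)=\tau(C)$. This holds because $TT^*$ and $T^*T$ have unitarily equivalent restrictions to the closures of the ranges of $T$ and $T^*$, and in the finite factor $\Cla$ the kernel projections of $T$ and $T^*$ have equal trace. It follows that $\tau_2(Q(z))=\tfrac12$ and $\re\tau_2(iR(z))=\tau(C)$. For the $\infty$ case, $\tau_2(Q(z)Q(\infty))=\tfrac12\tau(I-A)$. Therefore
\begin{align*}
d(z,\infty)^2=\tau_2(Q(z))+\tau_2(Q(\infty))-2\tau_2(Q(z)Q(\infty))=\tau(A)=\re\tau_2(iR(z)).
\end{align*}

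For finite $z,w$ I use $d(z,w)^2=1-2\re\tau_2(Q(z)Q(w))$ and evaluate $\tau_2(Q(z)Q(w))$ in entries. Write $A_z,B_z,C_z$ and $A_w,B_w,C_w$, with $T_z=T(z)$, $T_w=T(w)$. Then
\begin{align*}
2\tau_2(Q(z)Q(w))=\tau\bigl((I-A_z)(I-A_w)\bigr)+\tau(C_zC_w)+2\re\tau(B_zB_w^*).
\end{align*}
On the other side, the $(1,1)$ and $(2,2)$ entries of $R(z)^*R(w)=(iR(z))^*(iR(w))$ give
\begin{align*}
2\tau_2\bigl(R(z)^*R(w)\bigr)=\tau(A_zA_w)+\tau(B_zB_w^*)+\tau(B_z^*B_w)+\tau(C_zC_w).
\end{align*}
I also have $\re\tau_2(iR(z))=\tau(A_z)$ and $\re\tau_2(iR(w)^*)=\tau(A_w)$. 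Expanding $\tau((I-A_z)(I-A_w))=1-\tau(A_z)-\tau(A_w)+\tau(A_zA_w)$, the difference between $1-2\re\tau_2(Q(z)Q(w))$ and the claimed right-hand side reduces to
\begin{align*}
2\re\bigl(\tau(C_zC_w)-\tau(A_w)\bigr)+\bigl(\text{signed combination of the }\re\tau(B_zB_w^*)\text{ terms}\bigr).
\end{align*}
The sign conventions here will need care.

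The main obstacle is closing this identity. I expect to need the relations $B_z=A_zT_z$, the identity $I-A=TT^*A$, and a resolvent-type identity between $(I+T_z^*T_z)^{-1}$ and $(I+T_w^*T_w)^{-1}$. Directly comparing $\tau(A_zA_w)$ with $\tau(C_zC_w)$ seems hopeless, since they need not be equal. So my first attempt is to bypass the entries: use that $Q(z)$ is the spectral projection of $H(z)$ onto $(0,\infty)$ plus half the kernel contribution, read off as $Q(z)=\tfrac12(I+\text{sign-like operator})$ rotated by $\diag(I,-I)$. Then I rewrite $Q(z)-Q(w)$ as a linear combination of $R(z),R(w)$, their adjoints and $\diag(I,-I)$. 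Throughout, all manipulations with unbounded $T$ are justified because $A,B,C$ are bounded and the affiliated operators form a $*$-algebra.
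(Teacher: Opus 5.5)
Your computation of $d(z,\infty)$ is fine, and your entrywise formulas for $iR(z)$, $Q(z)$ and $R(z)^*R(w)$ are correct. However, the main identity for finite $z,w$ is never proved, and the obstacle you ran into comes from a sign error. In the statement, $\tau_2(iR(w)^*)$ means $\tau_2\bigl(i\,R(w)^*\bigr)$, and $i\,R(w)^*=-(iR(w))^*$. Hence $\re\tau_2(iR(w)^*)=-\re\tau_2(iR(w))=-\tau(A_w)$, not $+\tau(A_w)$. Your reading cannot be the intended one: with it the two sides differ by exactly $2\tau(A_w)\neq 0$, and at $w=z$ it would give $d(z,z)^2=-2\tau_2(R(z)^*R(z))<0$.

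With the correct sign there is nothing left to close. By your own formulas, both $1-2\re\tau_2(Q(z)Q(w))$ and the right-hand side of the lemma equal
\[
\tau(A_z)+\tau(A_w)-\tau(A_zA_w)-\tau(C_zC_w)-2\re\tau(B_zB_w^*)
\]
term by term; note that $\tau(A_zA_w)$ and $\tau(C_zC_w)$ are real. These two traces enter both sides with the same coefficient, so they never have to be compared, and no resolvent identity is needed. The residual $2\re(\tau(C_zC_w)-\tau(A_w))+\cdots$ you wrote down is a bookkeeping slip.

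The fallback you propose would not work either, because $Q(z)$ is not a (rotated) spectral projection of $H(z)$. Take a scalar $T=t>0$. Then $Q$ is the projection onto $\mathrm{span}\{(t,1)\}$, while the positive spectral projection of $H$ is $\tfrac12\begin{pmatrix}1&1\\1&1\end{pmatrix}$. That coincides with $Q$ only when $t=1$, and its conjugate by $\diag(1,-1)$ never coincides with $Q$.

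The correct bridge between $Q(z)$ and $R(z)$ is the Cayley transform, which is what the paper uses. The operator $U(z)=\diag(I,iI)\bigl(2Q(z)-I\bigr)\diag(I,iI)=I-2iR(z)$ is unitary. Multiplying by the unitary $\diag(I,iI)$ on either side preserves $\|\cdot\|_2$, so
\[
\|Q(z)-Q(w)\|_2^2=\tfrac14\|U(z)-U(w)\|_2^2=\tfrac12\bigl(1-\re\tau_2(U(z)^*U(w))\bigr).
\]
Expanding $U(z)^*U(w)=\bigl(I+2iR(z)^*\bigr)\bigl(I-2iR(w)\bigr)$ then gives the formula immediately, without needing $\tau_2(Q(z))=\tfrac12$. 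Once you fix the sign, your entrywise route is a valid alternative; it just involves more bookkeeping than the Cayley-transform argument.
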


\begin{proof}
Recall that $T(z) =c_1(z)X_1+c_2(z)X_2$ and 
\begin{align*}
    2Q(z) - I =
    \begin{pmatrix} 
        I-2(I+T(z)T(z)^*)^{-1} & 2T(z)(I+T(z)^*T(z))^{-1} \\ 
        2(I+T(z)^*T(z))^{-1}T(z)^* & 2(I+T(z)^*T(z))^{-1}-I 
    \end{pmatrix}
\end{align*}
is a self-adjoint unitary. Then
\begin{align*}
U(z) &:= \begin{pmatrix}
   I & 0\\
   0 & iI 
\end{pmatrix} \bigl ( 2Q(z) -I \bigr )
\begin{pmatrix}
   I & 0\\
   0 & iI 
\end{pmatrix}\\
    &= \begin{pmatrix} 
        I-2(I+T(z)T(z)^*)^{-1} & 2iT(z)(I+T(z)^*T(z))^{-1} \\ 
        2i(I+T(z)^*T(z))^{-1}T(z)^* & I- 2(I+T(z)^*T(z))^{-1}
    \end{pmatrix}\\
    &= I- 2(I + H(z)^2)^{-1} + 2iH(z)(I+H(z)^2)^{-1}\\
    & = I - 2(I + iH(z))^{-1} = 1- 2iR(z). 
\end{align*}
Since $U(z)$ is a unitary for every $z \in \Cb$, we obtain
\begin{align*}
\|Q(z) - Q(w)\|_2^2 &= \frac{1}{4}\|U(z) - U(w)\|_2^2  = \frac{1 - \re\tau_2(U(z)^*U(w))}{2} \\
&= \re \bigl(\tau_2(iR(z)) - \tau_2(iR(w)^*) - 2\tau_2(R(z)^*R(w)) \bigr ).
\end{align*}

Finally, note that
\begin{align*}
 d(z, \infty)^2 = \tau \left ((I+T(z)T(z)^*)^{-1} \right ) = \re \bigl ( \tau_2(iR(z)) \bigr ).
\end{align*}
\end{proof}

For the remainder of this section, let 
\begin{align*}
 \CE = \tau \otimes \id_{M_2(\Cb)} \colon \Clm \to I \otimes M_2(\Cb)   
\end{align*}
denote the entrywise conditional expectation. Then $(\Clm, \CE, I \otimes M_2(\Cb))$ is an operator-valued W$^*$-probability space. Since the von Neumann subalgebras $\Clw^*(K_1, V_1)$ and $\Clw^*(K_2, V_2)$ are freely independent in $(\Cla, \tau)$, the tensor product algebras $\Clw^*(K_1,V_1) \otimes M_2(\Cb)$ and $\Clw^*(K_2,V_2) \otimes M_2(\Cb)$ are free with amalgamation over $I \otimes M_2(\Cb)$ with respect to $\CE$ (see \cite{V95}). Explicitly, this means that for any alternating sequence of elements $X_k \in \Clw^*(K_{j_k}, V_{j_k}) \otimes M_2(\Cb)$ (with $j_k \neq j_{k+1}$) with $\CE(X_k) = 0$ for all $k$, the expectation of their product vanishes
\begin{align*}
\CE(X_1 X_2 \cdots X_n) = 0.
\end{align*}
Furthermore, $\CE$ extends to a orthogonal projection from $L^2(\Clm)$ onto the subspace $I \otimes M_2(\Cb)$.

\begin{lem}\label{lem:krho}
For every $z \in \Cb$, there exist unique positive real numbers $k_1(z), k_2(z), \rho(z)$ such that $\CE(iR(z))=\frac{1}{\rho(z)}I$,
\begin{align*}
 k_j(z) \geq 1, \qquad k_1(z) + k_2(z) = 1+\rho(z),
\end{align*}
and
\begin{align*}
    \int \frac{k_j(z)}{k_j(z)^2+|c_j(z)|^2 t^2} d\mu_j(t) = \frac1{\rho(z)}, \qquad j=1,2.
\end{align*}
Moreover, if $c_j(z)=0$, then $k_j(z) = \rho(z)$.
\end{lem}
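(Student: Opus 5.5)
The plan is to use the operator-valued subordination theory of Belinschi--Mai--Speicher for the sum $H(z)=H_1(z)+H_2(z)$. The two summands are free with amalgamation over $I\otimes M_2(\Cb)$ with respect to $\CE$. The strategy is to determine the $M_2(\Cb)$-valued Cauchy transform of each summand explicitly, read off the subordination functions, and check that they have the claimed scalar form. Fix $z$.

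\textbf{Step 1: the individual Cauchy transforms.} For $b=\begin{pmatrix} \beta_1 & 0\\ 0&\beta_2\end{pmatrix}$ with $\im\beta_k>0$, compute $\CE\bigl((b-H_j(z))^{-1}\bigr)$. The $2\times 2$ block inverse has diagonal entries $\beta_2(\beta_1\beta_2-|c_j|^2X_jX_j^*)^{-1}$ and $\beta_1(\beta_1\beta_2-|c_j|^2X_j^*X_j)^{-1}$. Its off-diagonal entries contain an odd power of $X_j$.

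Since $X_jX_j^*=Y_jV_jV_j^*Y_j=Y_j^2$, the upper diagonal entry has trace $\beta_2\int(\beta_1\beta_2-|c_j|^2t^2)^{-1}d\mu_j$. The lower entry involves $V_j^*Y_j^2V_j$, which has the same distribution, so it gives $\beta_1$ times the same integral.

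The off-diagonal traces are $\tau(f(Y_j)Y_jV_j)$-type terms. These need not vanish for a single pair, and this is the first place care is needed. I would handle it by noting that the assumption $\tau(V_j)=0$ is used only after combining with freeness: the relevant quantity is the full resolvent of $H$. Alternatively, invoke the Remark to replace $V_j$ by $V_jU$ with $U$ Haar and free. Then $\tau(f(Y_j)Y_jV_jU)=0$, so each $\CE((b-H_j)^{-1})$ is diagonal and diagonal arguments are preserved.

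\textbf{Step 2: reduce to a scalar fixed-point problem.} By the symmetry $\begin{pmatrix}0&1\\1&0\end{pmatrix}$ conjugation combined with the distributional equality above, restrict to scalar arguments $b=i\lambda I$. Then $\CE((i\lambda-H_j)^{-1})=-i g_j(\lambda)I$ with
\[
g_j(\lambda)=\int \frac{\lambda}{\lambda^2+|c_j|^2t^2}\,d\mu_j(t).
\]
BMS subordination gives analytic self-maps $\omega_1,\omega_2$ satisfying
\[
\CE(R(z))=\CE\bigl((\omega_j-H_j)^{-1}\bigr),\qquad \omega_1+\omega_2=iI+\CE(R(z))^{-1}.
\]
By uniqueness of the fixed point in the Denjoy--Wolff style iteration, together with the invariance of scalar-imaginary diagonal matrices under the iteration maps, the solution must have the form $\omega_j=ik_jI$ with $k_j>0$. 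Writing $\CE(iR(z))=\rho^{-1}I$ turns the relations into exactly the stated system:
\[
g_j(k_j)=1/\rho,\qquad k_1+k_2=1+\rho.
\]
The inequality $k_j\ge 1$ comes from $\im\omega_j\ge \im b$, the standard BMS bound. When $c_j=0$ we get $g_j(k)=1/k$, hence $k_j=\rho$.

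\textbf{Step 3: uniqueness.} I will prove uniqueness directly. Each map $k\mapsto g_j(k)$ is strictly decreasing on $[1,\infty)$, with $g_j(k)\le 1/k$. So for each value $s=1/\rho$ there is at most one $k_j$. Then $k_1(s)+k_2(s)-1-1/s$ is monotone in $s$, which forces a single solution.

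\textbf{Main obstacle.} The delicate point is justifying that the matrix-valued subordination functions are scalar multiples of $iI$, and that BMS applies to unbounded affiliated $H_j$. For the latter, I would approximate $Y_j$ by its bounded truncations and pass to the limit, using continuity of the fixed point.
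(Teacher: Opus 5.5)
Your existence argument follows the paper's route. You truncate $Y_j$, apply Belinschi--Mai--Speicher subordination at $B=iI$, note that the iteration preserves $i\Rb_{>0}I$, read off $k_j\ge 1$ from $\im\omega_j(B)\ge\im B$, and let $N\to\infty$. The paper handles the limit through convergence in measure of the resolvents, the bound $k_{j,N}\le\rho_N$, a convergent subsequence and dominated convergence, rather than through continuity of fixed points. Two corrections are needed in this part.

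First, the off-diagonal traces in Step~1 do vanish. $V_j$ is free from $K_j$, hence from every bounded Borel function $\phi(Y_j)$, so $\tau(\phi(Y_j)V_j)=\tau(\phi(Y_j))\tau(V_j)=0$. This is exactly how the paper obtains $\CE\bigl((irI-H_{j,N})^{-1}\bigr)=-iL_j(r)I$. Your fallback via the Remark would not prove this lemma. Replacing $V_j$ by $V_jU$ conjugates $H(z)$ by $\diag(I,U)$, which changes the off-diagonal entries of $\CE(R(z))$, and the Remark only asserts that $d(z,w)$ is unchanged.

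Second, for the invariance step you must check that $h_j(irI)+iI=i\bigl(1/L_j(r)-r+1\bigr)I$ has $1/L_j(r)-r+1>0$, which holds because $L_j(r)\le 1/r$. This keeps the iterates in $\mathbb{H}^{+}(M_2(\Cb))$.

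The genuine gap is Step~3. The map $k\mapsto g_j(k)=\int \frac{k}{k^2+|c_j|^2t^2}\,d\mu_j(t)$ is not decreasing on $[1,\infty)$. Indeed $\partial_k\frac{k}{k^2+s^2}=\frac{s^2-k^2}{(k^2+s^2)^2}>0$ whenever $s>k$, and $\mu_j$ may well charge $\{|c_j|t>k\}$. Already in the scalar case of Section~3 one has $g_1(k)=k/(k^2+p_1)$. Taking $z=1$, $a=5/6$, $b=20/21$ gives $p_1=p_2=20$ and $\rho=9$, and then $g_1(4)=g_1(5)=1/9$ with both arguments in $[1,\infty)$.

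So a single equation $g_j(k_j)=1/\rho$ does not determine $k_j$, and the monotonicity in $s$ you then invoke has no basis. It is also beside the point, since $\rho$ is already fixed by $\CE(iR(z))=\rho^{-1}I$. Uniqueness has to use the coupling $k_1+k_2=1+\rho$.

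The paper's argument runs as follows. Suppose $(l_1,l_2)\ne(k_1,k_2)$ is another solution. Then $l_j\ne k_j$ for both $j$, which forces $c_j\neq 0$, because $c_j=0$ gives $k_j=l_j=\rho$. Set $f=(k_j^2+|c_j|^2t^2)^{-1}$ and $g=(l_j^2+|c_j|^2t^2)^{-1}$.
\begin{itemize}
\item Partial fractions give $\int fg\,d\mu_j=\frac{1}{\rho k_jl_j(k_j+l_j)}$.
\item Chebyshev's correlation inequality for the two nonincreasing functions $f,g$ gives $\int fg\,d\mu_j\ge\int f\,d\mu_j\int g\,d\mu_j=\frac{1}{\rho^2k_jl_j}$.
\item Together these say $k_j+l_j\le\rho$, and summing over $j$ gives $2(1+\rho)\le 2\rho$, a contradiction.
\end{itemize}
Observing that any solution yields a fixed point of the iteration map does not rescue your route directly either. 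The fixed-point uniqueness from Belinschi--Mai--Speicher is only available for the bounded truncations, not for the unbounded $Y_j$.
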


\begin{proof}
Fix $z \in \Cb$. To simplify notation throughout the proof, we simply write $H_j$ and $c_j$ instead of $H_j(z)$ and $c_j(z)$.

We first show the existence of $k_1(z), k_2(z)$, and $\rho(z)$. For every $N > 0$, let $Y_{j, N}: = \min(t, N)(Y_j)$ and
\begin{align*}
    H_{j,N} : = 
    \begin{pmatrix}
       0 & c_jY_{j, N}V_j\\
       \overline{c_j} V_j^*Y_{j,N} & 0 
    \end{pmatrix}.
\end{align*}
Applying Theorem 2.2 in \cite{BMS15} (see also Theorem 3.1 in \cite{BYZ24}) to the pair $(H_{1, N}, H_{2, N})$, there exists a pair of Fr\'{e}chet analytic maps
\begin{align*}
    \omega_1, \omega_2 \colon \mathbb{H}^{+}(M_2(\Cb)) \to \mathbb{H}^{+}(M_2(\Cb))
\end{align*}
such that $\im \omega_j(B) \geq \im B$, and  
\begin{align}\label{equ:krho_I}
    \CE\bigl( (\omega_j(B) - H_{j,N})^{-1} \bigr) = \CE\bigl( (B - H_{1,N} - H_{2,N})^{-1} \bigr) = \bigl ( \omega_1(B) + \omega_2(B) - B \bigr)^{-1},
\end{align}
where
\begin{align*}
    \mathbb{H}^{+}(M_2(\Cb)) = \left\{ B \in M_2(\Cb) \colon \frac{B - B^*}{2i} > \varepsilon I \mbox{ for some $\varepsilon > 0$} \right\}.
\end{align*}
Moreover,
\begin{align*}
  \omega_1(B) = \lim_{n} f_{B}^{(n)}(A), \qquad \forall A \in \mathbb{H}^{+}(M_2(\Cb)),   
\end{align*}
where  
\begin{align*}
   f_B (C): = \bigl (h_{2} (h_{1}(C) + B)) + B \bigr ), \qquad h_j(C):= \CE \bigl ( (C - H_{j, N})^{-1}  \bigr)^{-1} -C,
\end{align*}
for any $C \in \mathbb{H}^{+}(M_2(\Cb))$. And same result hold for $\omega_2(B)$ with $f_B$ repalced with $C \mapsto \bigl (h_{1} (h_{2}(C) + B)) + B \bigr )$. 

We now show that $-i\omega_j(iI)$ is positive scalar for $j = 1, 2$. For $r > 0$, we have 
\begin{align*}
(irI-H_{j,N})^{-1}
&=-\begin{pmatrix}
ir \bigl (r^2 + |c_j|^2Y_{j,N}^2\bigr )^{-1}& c_j Y_{j,N}\bigl (r^2 + |c_j|^2 Y_{j,N}^2\bigr )^{-1}V_j\\
\overline{c_j}V_j^* Y_{j,N}\bigl (r^2 + |c_j|^2Y_{j,N}^2\bigr )^{-1} &ir V_j^* \bigl (r^2 + |c_j|^2Y_{j,N}^2\bigr )^{-1} V_j
\end{pmatrix}
\end{align*}
Then the freeness of $Y_{j,N}$ and $V_j$, together with the condition $\tau(V_j) = 0$, implies that
\begin{align*}
    \CE \bigl( (irI-H_{j,N})^{-1} \bigr ) = -iL_j(r) I 
\end{align*}
where 
\begin{align*}
L_j(r) = \int \frac{r}{r^2+|c_j|^2 \min(t, N)^2} d\mu_j(t).
\end{align*}
Therefore, we have 
\begin{align*}
   h_j(irI) + iI = i\left (\frac{1}{L_{j}(r)} - r + 1\right )I.  
\end{align*}
Note that
\begin{align*}
    \frac{1}{L_{j}(r)} - r + 1 > 0
\end{align*}
since $0 < L_j(r) \leq\frac{1}{r}$. Hence there exist $k_{1,N}, k_{2, N} \geq 1$ such that 
\begin{align*}
    \omega_1(iI) = ik_{1,N}I, \qquad \omega_2(iI) = ik_{2,N}I.
\end{align*} 
Let 
\begin{align*}
  \rho_{N}:= k_{1,N} + k_{2, N} -1.
\end{align*}
Then equations \eqref{equ:krho_I} yields
\begin{align*}
    \left ( \int \frac{k_{j,N}}{k_{j,N}^2+|c_j|^2 \min(t, N)^2} d\mu_j(t) \right ) I = \frac{1}{\rho_{N}} I = \CE \bigl (i (iI - H_{1,N} - H_{2,N})^{-1}  \bigr ).
\end{align*}

Since $H_{1, N} + H_{2, N} \to H_{1} + H_2$ in measure, 
\begin{align*}
   (iI - H_{1, N} - H_{2, N})^{-1} \to (iI - H_{1} - H_{2})^{-1}
\end{align*}  
in measure (see, for example, Lemma 2.8.4 in \cite{DPF20}). In particular, there exists $\rho \geq 1$ such that 
\begin{align*}
   \CE \bigl ( i(iI - H_{1} - H_{2})^{-1}  \bigr ) = \lim_{n \to \infty} \frac{1}{\rho_N}I = \frac{1}{\rho}I. 
\end{align*} 
Since $k_{j, N} \le \rho_N$, the sequence $\{k_{j, N}\}_{N=1}^\infty$ is bounded for each $j \in \{1, 2\}$. Therefore, there exists a subsequence along which $k_{1, N}$ and $k_{2, N}$ converge simultaneously to limits $k_1$ and $k_2$, respectively. Hence,
\begin{align*}
    k_1 + k_2 - 1 = \rho.
\end{align*}
Finally, by the Dominated Convergence Theorem,
\begin{align*}
    \int \frac{k_j}{k_j^2 + |c_j|^2 t^2} d\mu_j(t) = \frac{1}{\rho},
\end{align*}
and $k_j = \rho$ if $c_j = 0$.

To show the uniqueness of $k_1, k_2$, and $\rho$, note first that $\rho$ is uniquely determined by the condition $\CE\bigl(iR(z)\bigr) = \frac{1}{\rho}I$. Assume that $l_1, l_2$ are positive numbers satisfying 
\begin{align*}
   \int \frac{l_j}{l_j^2+|c_j|^2 t^2} d\mu_j(t) = \frac1{\rho}
\end{align*} 
and $l_1 + l_2 = 1 + \rho$. 

Assume that $l_1 \neq k_1$ and $l_2 \neq k_2$. Let 
\begin{align*}
  f(t) := \frac1{k_j^2+|c_j(z)|^2t^2}, \qquad g(t) := \frac1{l_j^2+|c_j(z)|^2t^2}.
\end{align*}
Note that
\begin{align*}
 \int f(t)g(t) d\mu_j(t) =\frac{1}{l_j^2-k_j^2} \left (\int f(t) d\mu_j(t) - \int g(t) d\mu_j(t) \right) = \frac{1}{\rho k_jl_j(k_j+l_j)}.   
\end{align*}
Since $\mu_j$ is a probability measure and $f, g$ are nonincreasing, 
\begin{align*}
&\int f(t)g(t) d\mu_j(t) -\left(\int f(t) d\mu_j(t)\right)\left(\int g(t) d\mu_j(t) \right)\\
&=\frac{1}{2} \int \int (f(t)-f(s))(g(t)-g(s)) d\mu_j(t)d\mu_j(s) \geq0.
\end{align*}
Therefore, 
\begin{align*}
    \frac{1}{\rho k_jl_j(k_j+l_j)} \geq \frac{1}{\rho^2 k_j l_j},
\end{align*}
or equivalently
\begin{align*}
    k_j + l_j \leq \rho.
\end{align*}
Summing over $j \in \{1, 2\}$ yields
\begin{align*}
   2(1 + \rho) =  k_1 + l_1 + k_2 + l_2 \leq 2\rho.
\end{align*}
which is impossible since $\rho > 0$. Therefore $k_1 = l_1$ and $k_2 = l_2$.
\end{proof}

Hereafter, for every $z \in \Cb$, let $k_1(z)$, $k_2(z)$ and $\rho(z)$ be the unique positive real numbers satisfying the conditions in Lemma \ref{lem:krho}. \\

For every $z \in \Cb$, let
\begin{align*}
   R_j(z) := \bigl (ik_j(z)I - H_j(z) \bigr)^{-1}, \qquad Z_j(z) := i\rho(z)R_j(z) - I, 
\end{align*}
and 
\begin{align}\label{equ:beta_def}
  \beta_j(z) = \frac{\rho(z)- k_j(z)}{k_j(z)}. 
\end{align}

\begin{lem}\label{lem:Z_cexp} 
For every $z \in \Cb$, $\CE(Z_j(z)) = 0$,  
\begin{align*}
&\CE(Z_j(z)^*Z_j(z)) = \beta_j(z) I,\\
&\|Z_j(z)\| \leq \max\{1,\beta_j(z)\}.
\end{align*}
In particular, $Z_j(z)=0$ if $c_j(z)=0$. 
\end{lem}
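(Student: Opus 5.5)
The computation is direct once $R_j(z)$ is written out explicitly. With $k=k_j(z)$, $c=c_j(z)$, $Y=Y_j$, $V=V_j$, the same formula used in the proof of Lemma \ref{lem:krho} (now with $\min(t,N)$ replaced by $t$, which is legitimate because the functions involved are bounded Borel functions of $Y$) gives
\begin{align*}
R_j(z) = -\begin{pmatrix}
ik(k^2+|c|^2Y^2)^{-1} & cY(k^2+|c|^2Y^2)^{-1}V\\
\overline{c}V^*Y(k^2+|c|^2Y^2)^{-1} & ikV^*(k^2+|c|^2Y^2)^{-1}V
\end{pmatrix}.
\end{align*}
We handle the three assertions in turn.

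\emph{Vanishing expectation.} Take $\CE$ entrywise. The diagonal entries of $i\rho R_j$ have trace $\rho\int \frac{k}{k^2+|c|^2t^2}\,d\mu_j(t)=1$ by Lemma \ref{lem:krho}; for the $(2,2)$ entry this uses traciality. The off-diagonal entries have the form $f(Y)V$ or $V^*f(Y)$. By freeness of $Y$ and $V$ together with $\tau(V)=0$, $\tau(f(Y)V)=\tau(f(Y))\tau(V)=0$. Hence $\CE(i\rho R_j)=I$, and therefore $\CE(Z_j)=0$.

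\emph{Second moment.} Since $\CE(Z_j)=0$, we have $\CE(Z_j^*Z_j)=\rho^2\CE(R_j^*R_j)-I$. To compute $\CE(R_j^*R_j)$ we use the resolvent identity. The operator $H_j$ is self-adjoint, so $R_j^*=(-ikI-H_j)^{-1}$, and
\begin{align*}
R_j^*R_j=\frac{R_j-R_j^*}{2ik}.
\end{align*}
Since $\CE(R_j)=\frac{-i}{\rho}I$ and $\CE(R_j^*)=\frac{i}{\rho}I$, this gives $\CE(R_j^*R_j)=\frac{1}{\rho k}I$. Therefore
\begin{align*}
\CE(Z_j^*Z_j)=\Bigl(\frac{\rho}{k}-1\Bigr)I=\beta_j I .
\end{align*}

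\emph{Norm bound.} This is the only step that needs some care. Because $H_j$ is self-adjoint, $Z_j=i\rho(ik-H_j)^{-1}-I$ is a continuous function of $H_j$, namely $g(H_j)$ with
\begin{align*}
g(s)=\frac{i\rho}{ik-s}-1=\frac{\rho-k+is}{k+is}\qquad (s\in\Rb).
\end{align*}
Consequently $\|Z_j\|\le\sup_{s\in\Rb}|g(s)|$. We have
\begin{align*}
|g(s)|^2=\frac{(\rho-k)^2+s^2}{k^2+s^2}.
\end{align*}
As $s$ varies this ratio is monotone in $s^2$: it starts at $\beta_j^2$ when $s=0$ and tends to $1$ as $s^2\to\infty$. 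So $\sup_s|g(s)|=\max\{1,|\beta_j|\}$. The sign of $\beta_j$ is settled by the previous step: $\beta_j I=\CE(Z_j^*Z_j)\ge0$ forces $\beta_j\ge0$ (equivalently, $\rho\ge k_j$ follows from $k_{3-j}\ge1$ and $k_1+k_2=1+\rho$). Hence $\|Z_j\|\le\max\{1,\beta_j\}$.

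\emph{Degenerate case.} If $c_j=0$, then $H_j=0$ and Lemma \ref{lem:krho} gives $k_j=\rho$. Thus $R_j=\frac{1}{i\rho}I$, and $Z_j=i\rho\cdot\frac{1}{i\rho}I-I=0$.
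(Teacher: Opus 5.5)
Your proof is correct and is essentially the paper's argument. The explicit resolvent gives $\CE(Z_j)=0$, and the spectral bound $\frac{(\rho-k_j)^2+s^2}{k_j^2+s^2}$ gives the norm estimate; you also justify $\beta_j\ge 0$ there, which the paper leaves implicit. Your resolvent-identity route to $\CE(Z_j^*Z_j)$ is just another way of computing $\CE\bigl((k_j^2+H_j^2)^{-1}\bigr)=\frac{1}{\rho k_j}I$, which the paper gets by integrating against $\mu_j$.

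One sign slip needs fixing. Since $R_j-R_j^*=-2ik_jR_jR_j^*$, the identity is $R_j^*R_j=\frac{R_j^*-R_j}{2ik_j}$. As you wrote it, your stated values of $\CE(R_j)$ and $\CE(R_j^*)$ would give $-\frac{1}{\rho k_j}I$, although the value $\frac{1}{\rho k_j}I$ you then record is the correct one.
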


\begin{proof}
For a fixed $z \in \Cb$, as in the proof of Lemma \ref{lem:krho}, we suppress the argument $z$ in notations such as $Z_j(z)$ and $k_j(z)$.

By Lemma \ref{lem:krho}, 
\begin{align*}
   \CE(Z_j) = \CE(i\rho R_j-I) = 0. 
\end{align*}
Note 
\begin{align*}
 Z_j^*Z_j = \bigl ( (\rho-k_j)^2I+H_j^2 \bigr ) \bigl ( k_j^2I+H_j^2 \bigr)^{-1},
\end{align*}
since
\begin{align*}
Z_j = \bigl (i (\rho - k_j) I + H_j \bigr ) \bigl (ik_jI-H_j \bigr )^{-1}.
\end{align*}
Then 
\begin{align*}
    \|Z_j\|^2  \leq \sup_{t \ge 0} \frac{(\rho - k_j)^2 + t^2}{k_j^2 + t^2} = \max\{1, \beta_j^2 \}.
\end{align*}

By Lemma \ref{lem:krho}
\begin{align*}
 \CE(Z_j^*Z_j) &= I + \bigl( (\rho - k_j)^2 - k_j^2 \bigr) \left( \int \frac{d\mu_j(t)}{k_j^2+|c_j(z)|^2t^2}\right)I = \left(\frac{\rho - k_j}{k_j}\right)I.   
\end{align*}
If $c_j = 0$, then $\rho = k_j$, so $\CE(Z_j^*Z_j) = 0$, which implies $Z_j = 0$.
\end{proof}

Combining Lemma \ref{lem:Z_cexp} with fact that the von Neumann algebras $\Clw^*(K_1, V_1) \otimes M_2(\Cb)$ and $\Clw^*(K_2, V_2) \otimes M_2(\Cb)$ are free with amalgamation over $I \otimes M_2(\Cb)$, we immediately obtain the following result.

\begin{lem}\label{lem:Zprod_orth}
For any $z, w \in \Cb$, any $C \in I \otimes M_2(\Cb)$, and any two alternating products $Z_{j_1}(z)\cdots Z_{j_m}(z)$ and $Z_{l_1}(w)\cdots Z_{l_n}(w)$, we have
\begin{align*}
    \CE \bigl( (Z_{j_1}(z)\cdots Z_{j_m}(z))^* C (Z_{l_1}(w)\cdots Z_{l_n}(w)) \bigr) = 0
\end{align*}
unless $m = n$ and $j_s = l_s$ for all $s \in \{1, \dots, n\}$.
\end{lem}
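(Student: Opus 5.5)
We prove Lemma \ref{lem:Zprod_orth} by reducing it to the defining property of freeness with amalgamation, using that scalar matrices can be absorbed into the centered factors.

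\textbf{Setup.} Write $\Clb_j := \Clw^*(K_j,V_j)\otimes M_2(\Cb)$. First note that $Z_j(z)$ is a bounded operator: it equals $i\rho R_j - I$, where $R_j$ is the resolvent of a self-adjoint operator affiliated with $\Clb_j$ at the point $ik_j$ with $k_j>0$. Hence $Z_j(z)\in\Clb_j$, and Lemma \ref{lem:Z_cexp} gives $\CE(Z_j(z))=0$ and $\CE(Z_j(z)^*)=0$. Since $Z_j(z)^*\in\Clb_j$ as well, the operator
\begin{align*}
W := Z_{j_m}(z)^*\cdots Z_{j_1}(z)^*\, C\, Z_{l_1}(w)\cdots Z_{l_n}(w)
\end{align*}
is a word in centered elements that alternate between $\Clb_1$ and $\Clb_2$, except possibly at the junction around $C$.

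\textbf{Induction on $m+n$.} If $m=0$ or $n=0$, the word is alternating after $C$ is absorbed into the adjacent factor. This absorption is legitimate because $C$ commutes with $\CE$-bimodularity: $\CE(CZ)=C\,\CE(Z)=0$, so $CZ$ is still centered and lies in the same $\Clb_j$. Freeness then gives $\CE(W)=0$, unless $m=n=0$, in which case there is nothing to prove.

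If $m,n\ge1$, compare the inner letters $j_1$ and $l_1$.
\begin{itemize}
\item If $j_1\neq l_1$, absorb $C$ into $Z_{l_1}(w)$. The whole word is then alternating and centered, so $\CE(W)=0$.
\item If $j_1=l_1=j$, write
\begin{align*}
Z_{j}(z)^*CZ_{j}(w) = \CE\bigl(Z_j(z)^*CZ_j(w)\bigr) + Y,
\end{align*}
where $Y\in\Clb_j$ and $\CE(Y)=0$. In the word with $Y$ in the middle, the neighbours $Z_{j_2}(z)^*$ and $Z_{l_2}(w)$ (when present) both come from the other algebra, so that word is alternating and centered, and its expectation is $0$. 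The remaining term is the same kind of expression with $C$ replaced by $C' := \CE(Z_j(z)^*CZ_j(w))\in I\otimes M_2(\Cb)$ and with $m,n$ each reduced by one. The induction hypothesis applies to it.
\end{itemize}

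\textbf{Conclusion.} Running the induction, $\CE(W)$ can be nonzero only if every comparison lands in the case $j_s=l_s$ and both words run out at the same time. That is exactly the condition $m=n$ and $j_s=l_s$ for all $s$.

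\textbf{Main point to check.} The only delicate step is that multiplication by a scalar matrix $C$ preserves membership in $\Clb_j$ and preserves centeredness. Both follow because $I\otimes M_2(\Cb)\subset\Clb_j$ and $\CE$ is $I\otimes M_2(\Cb)$-bimodular.
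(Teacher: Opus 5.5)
Your proof is correct, and it is the standard freeness-with-amalgamation argument that the paper compresses into one remark: the lemma follows immediately from $\CE(Z_j(z))=0$ (Lemma~\ref{lem:Z_cexp}) together with the freeness of $\Clw^*(K_1,V_1)\otimes M_2(\Cb)$ and $\Clw^*(K_2,V_2)\otimes M_2(\Cb)$ over $I\otimes M_2(\Cb)$. Your induction, which replaces the innermost pair $Z_j(z)^*CZ_j(w)$ by its expectation plus a centered element of the same algebra, simply makes that ``immediately'' explicit.
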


\begin{lem}\label{lem:expension}
For every $z \in \Cb$, 
\begin{align}\label{equ:expension_0}
    R(z) = \frac{1}{i\rho(z)} \left( I+\sum_{n\geq1} \sum_{\substack{i_1,\ldots,i_n\in\{1,2\}\\i_m\ne i_{m+1}}} Z_{i_1}(z)\cdots Z_{i_n}(z)
    \right).
\end{align}
In particular, 
\begin{align*}
    \tau_2 \bigl (R(z)^*R(z) \bigr ) = \frac{1}{\rho(z)}.
\end{align*}
\end{lem}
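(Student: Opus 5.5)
We would prove Lemma \ref{lem:expension} by first establishing that the series converges in $L^2(\Clm,\tau_2)$, and then identifying its limit with $R(z)$ through an algebraic telescoping identity. We suppress $z$ throughout.

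\textbf{Step 1: convergence.} Write $S$ for the series in parentheses. By Lemma \ref{lem:Zprod_orth} with $C=I$, distinct alternating words $Z_{i_1}\cdots Z_{i_n}$ are mutually orthogonal in $L^2$. Iterating $\CE(Z_j^*CZ_j)$ for scalar $C$ together with $\CE(Z_j^*Z_j)=\beta_jI$ from Lemma \ref{lem:Z_cexp}, we get
\begin{align*}
\tau_2\bigl((Z_{i_1}\cdots Z_{i_n})^*Z_{i_1}\cdots Z_{i_n}\bigr)=\prod_{s}\beta_{i_s}.
\end{align*}
Freeness should make this hold here; one also has to check that $\CE(Z_j^* C Z_j)$ remains scalar for scalar $C$, which follows from the explicit form of $R_j$. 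For each $n$ there are exactly two alternating words, so the squared norms sum to at most a geometric series in $\beta_1\beta_2$.

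The key identity is
\begin{align*}
\beta_1\beta_2=\frac{(\rho-k_1)(\rho-k_2)}{k_1k_2}=\frac{(k_2-1)(k_1-1)}{k_1k_2}<1,
\end{align*}
which uses $k_1+k_2=1+\rho$ and $k_j\ge1$. Hence the series converges in $L^2$, and
\begin{align*}
\|S\|_2^2=1+\sum_{n\ge1}\bigl(\text{two words of length } n\bigr)=\frac{(1+\beta_1)(1+\beta_2)}{1-\beta_1\beta_2}.
\end{align*}
Substituting $1+\beta_j=\rho/k_j$ gives
\begin{align*}
\|S\|_2^2=\frac{\rho^2}{k_1k_2-(k_1-1)(k_2-1)}=\frac{\rho^2}{k_1+k_2-1}=\rho.
\end{align*}
Therefore $\tau_2(R^*R)=\rho^{-2}\|S\|_2^2=1/\rho$, which is the stated corollary once \eqref{equ:expension_0} is established.

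\textbf{Step 2: identifying the limit.} Let $S_j$ denote the sum of all alternating words beginning with $Z_j$, so that $S=I+S_1+S_2$, $S_1=Z_1(I+S_2)$ and $S_2=Z_2(I+S_1)$. Since $Z_j=i\rho R_j-I$, we have $I+Z_j=i\rho R_j$, and therefore
\begin{align*}
S=I+S_1+S_2=(I+Z_1)(I+S_2)-S_2=i\rho R_1(I+S_2)-S_2.
\end{align*}
Apply $(ik_1I-H_1)$ and use $ik_1+ik_2-i\rho=i$. The target is $(iI-H)S=i\rho I$, that is, $S/(i\rho)=R$. Expanding $(iI-H_1-H_2)S$, with $S=(I+Z_1)(I+S_2)-S_2=(I+Z_2)(I+S_1)-S_1$, reduces to
\begin{align*}
(ik_jI-H_j)(I+Z_j)=i\rho I,
\end{align*}
which gives
\begin{align*}
(iI-H)S=i\rho(I+S_2)+i\rho(I+S_1)-(ik_1I-H_1)S_2-(ik_2I-H_2)S_1-i\rho S+\dots
\end{align*}
This bookkeeping collapses using $(ik_jI-H_j)Z_j=i(\rho-k_j)I+H_j$. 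We will carry this out first for the truncated operators $H_{j,N}$, where everything is bounded, or alternatively on finite partial sums, where the identity holds up to a tail term.

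\textbf{Main obstacle.} The hard part is justifying the identity when $H$ is unbounded and the series converges only in $L^2$. We would handle it as follows. Show that the partial sums $S^{(n)}$ satisfy $(iI-H)S^{(n)}=i\rho I+E_n$, where $E_n$ is an explicit boundary term of the form $(ik_jI-H_j)Z_j\cdot(\text{word of length } n)$. Then apply the bounded operator $R$ to obtain $S^{(n)}=i\rho R+R E_n$. Since $(ik_jI-H_j)Z_j=i(\rho-k_j)I+H_j$ is unbounded, we will instead rewrite $E_n$ as a word in the $Z$'s times a bounded factor: using $R(ik_j-H_j)=I+R(ik_j-i+H_{j'})$, one expresses $RE_n$ through bounded operators and length-$n$ words, which tend to zero in $L^2$ by Step 1. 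If this proves awkward, we would fall back on proving the whole lemma for $H_{j,N}$ and passing to the limit in measure, as in Lemma \ref{lem:krho}. That route requires continuity of $k_{j,N}$ and $\rho_N$, which the uniqueness in Lemma \ref{lem:krho} supplies.
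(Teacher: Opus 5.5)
Your Step 1 coincides with the paper's argument (orthogonality of alternating words, $\|Z_{i_1}\cdots Z_{i_n}\|_2^2=\prod_s\beta_{i_s}$, $\beta_1\beta_2<1$, $\|S\|_2^2=\rho$) and is fine. Step 2, however, does not work as written, because the factorization is wrong. Since $Z_1(I+S_2)=S_1$, one has $(I+Z_1)(I+S_2)=I+S_2+S_1=S$. So $S=(I+Z_1)(I+S_2)=(I+Z_2)(I+S_1)$ with no $-S_2$; your right-hand side equals $I+S_1$. With your version the cross terms $-(ik_1I-H_1)S_2-(ik_2I-H_2)S_1$ survive. The identity $(ik_jI-H_j)Z_j=i(\rho-k_j)I+H_j$ cannot remove them, because in $S_2=Z_2(I+S_1)$ the operator $H_1$ meets $Z_2$, not $Z_1$. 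With the correct factorization nothing needs to collapse: $(ik_1I-H_1)S=i\rho(I+S_2)$ and $(ik_2I-H_2)S=i\rho(I+S_1)$. Since $iI-H=(ik_1I-H_1)+(ik_2I-H_2)-i\rho I$, this gives $(iI-H)S=i\rho(2I+S_1+S_2-S)=i\rho I$.

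You also leave open the justification for unbounded $H$, and your primary route does not close. The missing idea, which is exactly what the paper uses, is that no limit is needed. The elements $S,S_1,S_2\in L^2(\Clm)$ are closed densely defined operators affiliated with $\Clm$. The relations $S_j=Z_j(S-S_j)$ hold in the $*$-algebra of affiliated operators, since left multiplication by a bounded operator on $L^2$ agrees with the strong product. In that algebra associativity holds and $I+Z_j=i\rho R_j$ has inverse $\frac{1}{i\rho}(ik_jI-H_j)$. Hence $S_j=(I+Z_j)^{-1}Z_jS=\bigl(I-\frac{1}{i\rho}R_j^{-1}\bigr)S$. Summing over $j$ and using $k_1+k_2=1+\rho$ gives $\frac{1}{i\rho}(iI-H)S=I$, and left multiplication by $R$ gives $S=i\rho R$.

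In your partial-sum route, by contrast, $E_n$ contains $H_j$ applied to the length-$n$ word beginning with $Z_{j'}$. Your rewriting $R(ik_jI-H_j)=I+R(i(k_j-1)I+H_{j'})$ lets $H_{j'}$ absorb one letter via $H_{j'}Z_{j'}=i\rho(ik_{j'}R_{j'}-I)-H_{j'}$. But it leaves $RH_{j'}$ acting on a word of length $n-1$ beginning with $Z_j$. Iterating only shortens the word, down to length zero, so Step 1 gives no bound on $\|RE_n\|_2$ (and $RH_{j'}$ has no reason to be bounded). The truncation fallback can be made rigorous, but it needs full-sequence convergence of $(k_{1,N},k_{2,N},\rho_N)$, tail bounds uniform in $N$, and convergence of each word, and you supply none of these. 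The affiliated-operator computation makes all of it unnecessary.
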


\begin{proof}
By Lemma \ref{lem:krho} and Lemma \ref{lem:Z_cexp}
  \begin{align*}
       \left \|Z_{j_1}(z) \cdots Z_{j_n}(z) \right \|_2^2 = \left ( \prod_{l=1}^n \beta_{j_l}(z) \right ),
   \end{align*}
where $\beta_j(z)$ is defined in \eqref{equ:beta_def}. Note that 
\begin{align*}
    \beta_1(z)\beta_2(z) = \frac{(\rho(z)-k_1(z))(\rho(z)-k_2(z))}{k_1(z)k_2(z)} = 1 - \frac{(k_1(z)+k_2(z)-1)}{k_1(z)k_2(z)} < 1.
\end{align*}
Then 
\begin{align*}
    &\left \|I+\sum_{n\geq1} \sum_{\substack{i_1,\ldots,i_n\in\{1,2\}\\i_m\ne i_{m+1}}} Z_{i_1}(z)\cdots Z_{i_n}(z) \right \|_2^2\\
    &= 1+\sum_{m \geq 0} \left (\beta_1(z) + \beta_2(z) \right ) \bigl(\beta_1(z)\beta_2(z) \bigr)^m + 2\sum_{m \geq 1} \bigl(\beta_1(z)\beta_2(z) \bigr)^m\\
    &= \frac{(\beta_1(z)+1)(\beta_2(z)+1)}{1-\beta_1(z)\beta_2(z)} = \rho(z).
\end{align*}
Therefore 
\begin{align*}
   S(z) := I+\sum_{n\geq1} \sum_{\substack{i_1,\ldots,i_n\in\{1,2\}\\i_m\ne i_{m+1}}} Z_{i_1}(z)\cdots Z_{i_n}(z) 
\end{align*}
is a vector in $L^2(\Clm)$. Since multiplication by $Z_j(z)$ is continuous on $L^2(\Clm)$. Let 
\begin{align*}
   S_j(z) := \sum_{n\geq1} \sum_{\substack{i_1,\ldots,i_n\in\{1,2\}\\i_m\ne i_{m+1} \\ i_1 = j} } Z_{i_1}(z)\cdots Z_{i_n}(z),  \qquad j \in \{1, 2\}.
\end{align*}
In the following, we treat $S(z), S_1(z), S_2(z)$ as densely defined closed operators affiliated $\Clm$.

Note that 
\begin{align}\label{equ:lem:expension_I}
   S(z) = I + S_{1}(z) + S_{2}(z), \qquad  S_j(z) = Z_j(z)(S(z) - S_j(z)).
\end{align}
Recall that
\begin{align*}
  I+Z_j(z) = i \rho(z) R_j(z). 
\end{align*}
The identities \eqref{equ:lem:expension_I} in the $*$-algebra of operators affiliated with $\Clm$  imply
\begin{align*}
    S_j(z)= (I+Z_j(z))^{-1}Z_{j}(z)S(z) = \bigl (I -  \frac{1}{i\rho(z)}R_j(z)^{-1} \bigr )S(z).
\end{align*}
Therefore
\begin{align*}
    \frac{1}{i\rho(z)} \bigl (iI - H(z) \bigr ) S(z) = \left [ 
       \frac{1}{i\rho(z)}(R_1(z)^{-1} + R_2(z)^{-1}) - I 
   \right] S(z) = I. 
\end{align*}
Multiplication by the bounded inverse $R(z)$ of $iI-H(z)$ proves the equation \eqref{equ:expension_0}.
\end{proof}

For $j=1,2$ and $z \in \Cb$, define Borel functions on
$[0,\infty)$ by
\begin{align*}
f_{j,z}(t)&=\frac{\rho(z)k_j(z)}{k_j(z)^2+|c_j(z)|^2t^2}-1,\\
g_{j,z}(t)&=\frac{\rho(z)c_j(z)t}{k_j(z)^2+|c_j(z)|^2t^2}.
\end{align*}
Note that $f_{j,z} = g_{j,z} \equiv 0$ if $c_j(z)=0$. 

Let 
\begin{align*}
M_j(z,w) = \begin{pmatrix}
    A_j(z,w) & B_j(z,w)\\
    \overline{B_j(z,w)} & A_j(z,w)
\end{pmatrix},
\end{align*}
where 
\begin{align*}
 A_j(z,w) &=\int f_{j,z}(t)f_{j,w}(t) d\mu_j(t),\\
 B_j(z,w) &=\int g_{j,z}(t)\overline{g_{j,w}(t)} d\mu_j(t).
\end{align*}

\begin{lem}\label{lem:ZCZ}
For every $z, w \in \Cb$ and every diagonal scalar matrix 
\begin{align*}
 C=\diag(a, b) = \begin{pmatrix}
   aI & 0\\
   0 & bI 
\end{pmatrix},
\end{align*}
we have
\begin{align}\label{equ:ZCZ_I}
\CE(Z_j(z)^*CZ_j(w)) = \diag\bigl(M_j(z,w)(a, b)^{t}\bigr).
\end{align}
\end{lem}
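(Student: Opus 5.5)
Since $Z_j(z) = \bigl(i(\rho - k_j)I + H_j\bigr)\bigl(ik_jI - H_j\bigr)^{-1}$, the plan is to write $Z_j(z)$ as an explicit $2\times 2$ operator matrix in $Y_j$ and $V_j$, multiply out $Z_j(z)^* C Z_j(w)$, and apply $\CE$ entrywise, using the freeness of $Y_j$ and $V_j$ together with $\tau(V_j)=0$.

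\textbf{Step 1: the matrix form of $Z_j(z)$.} The resolvent formula in the proof of Lemma \ref{lem:krho} (with $r = k_j$ and without truncation) gives
\begin{align*}
Z_j(z) = \begin{pmatrix} f_{j,z}(Y_j) & -i g_{j,z}(Y_j) V_j \\ -i V_j^*\overline{g_{j,z}}(Y_j) & V_j^* f_{j,z}(Y_j) V_j \end{pmatrix}.
\end{align*}
Here the $(1,1)$ entry is $\rho k_j (k_j^2+|c_j|^2Y_j^2)^{-1} - I = f_{j,z}(Y_j)$. The off-diagonal entries are $-i\rho c_j Y_j(k_j^2+|c_j|^2Y_j^2)^{-1}V_j$ and its analogue, and $g_{j,z}(t)$ is exactly $\rho c_j t/(k_j^2+|c_j|^2t^2)$. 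I expect the main obstacle to be bookkeeping rather than concept: getting the phases $\pm i$ and the conjugations right. All entries are bounded functions of $Y_j$, so $\tau$ is defined on every product below.

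\textbf{Step 2: multiply out and take expectations.} Write $f=f_{j,z}$, $\tilde f = f_{j,w}$, $g = g_{j,z}$, $\tilde g = g_{j,w}$, all evaluated at $Y_j$. The $(1,1)$ entry of $Z_j(z)^* C Z_j(w)$ is
\begin{align*}
a\, f\tilde f + b\,(i g V_j)(-i V_j^*\overline{\tilde g}) = a f\tilde f + b\, g\overline{\tilde g},
\end{align*}
because $V_jV_j^*=I$. Its trace is $aA_j + bB_j$ (using $\overline{g}\tilde g$ versus $g\overline{\tilde g}$ as appropriate). The $(2,2)$ entry is
\begin{align*}
a\, V_j^*\overline{g}\tilde g V_j + b\, V_j^* f\tilde f V_j.
\end{align*}
Its trace is $a\overline{B_j} + bA_j$ by traciality, since $\tau(\overline g \tilde g) = \overline{\tau(g\overline{\tilde g})}$. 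This reproduces the pattern of $M_j(z,w)$, whose second row is $(\overline{B_j}, A_j)$.

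\textbf{Step 3: the off-diagonal entries vanish.} The $(1,2)$ entry is $-i\, a f \tilde g V_j - i\, b\, \overline{g}V_j V_j^* \tilde f V_j$, a sum of terms of the form $F(Y_j)V_j$. Freeness of $Y_j$ and $V_j$ with $\tau(V_j)=0$ gives $\tau(F(Y_j)V_j) = \tau(F(Y_j))\tau(V_j) = 0$. The $(2,1)$ entry has the form $V_j^*F(Y_j)$ and vanishes for the same reason.

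\textbf{Step 4: careful sign and conjugation check.} I would verify the conjugations against the definition $B_j = \int g_{j,z}\overline{g_{j,w}}\,d\mu_j$ and adjust which diagonal entry receives $B_j$ and which receives $\overline{B_j}$. In the degenerate case $c_j=0$ both sides are zero by Lemma \ref{lem:Z_cexp}, so that case needs no separate argument.
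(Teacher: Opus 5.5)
Your proposal is correct and follows the paper's own proof: you write $Z_j$ entrywise in terms of $f_{j,z}(Y_j)$, $g_{j,z}(Y_j)$ and $V_j$, multiply out $Z_j(z)^*CZ_j(w)$, and kill the off-diagonal traces using freeness of $Y_j$ and $V_j$ together with $\tau(V_j)=0$. Your diagonal entries $af\tilde f+bg\overline{\tilde g}$ and $V_j^*(a\overline{g}\tilde g+bf\tilde f)V_j$ already match the rows of $M_j$, so the hedging in Step 4 is unnecessary. There is one harmless slip in Step 3: the second term of the $(1,2)$ entry is $(igV_j)(bV_j^*\tilde fV_j)=+ib\,g\tilde fV_j$, not $-ib\,\overline{g}V_jV_j^*\tilde fV_j$; since this term still has the form $F(Y_j)V_j$, the argument is unaffected.
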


\begin{proof}
Note that
\begin{align*}
Z_j(z) &=\begin{pmatrix}
f_{j,z}(Y_j) & -ig_{j,z}(Y_j) V_j\\
-iV_j^* g_{j,z}(Y_j)^* & V_j^* f_{j,z}(Y_j) V_j
\end{pmatrix}.
\end{align*}
Then
\small{
\begin{align*}
Z_j(z)^* C Z_j(w) = 
\begin{pmatrix}
 a f_{j,z}(Y_j) f_{j, w}(Y_j) + b g_{j,z}(Y_j)g_{j,w}(Y_j)^* & i \bigl (bg_{j,z}(Y_j)f_{j,w}(Y_j) -a f_{j,z}(Y_j)g_{j,w}(Y_j) \bigr )V_j\\
 iV_j^* \bigl (ag_{j,z}(Y_j)^* f_{j,w}(Y_j) -b f_{j,z}(Y_j)g_{j,w}(Y_j)^* \bigr ) & V_j^* \bigl (b f_{j,z}(Y_j) f_{j, w}(Y_j) + a g_{j,z}(Y_j)^*g_{j,w}(Y_j) \bigr ) V_j 
\end{pmatrix}.  
\end{align*}
}

The freeness of $Y_j$ and $V_j$, together with the condition $\tau(V_j) = 0$, implies that the traces of the off-diagonal entries vanish. Hence, \eqref{equ:ZCZ_I} holds by the definition of the matrix $M_j(z,w)$.
\end{proof}

\begin{lem}\label{lem:MM_leq_one}
   For $z, w \in \Cb$, we have
\begin{align*}
\|M_j(z,w)\| \leq\sqrt{\beta_j(z)\beta_j(w)}, \qquad  \|M_1(z,w) M_2(z,w)\| < 1.
\end{align*}
\end{lem}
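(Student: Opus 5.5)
The plan is to read $M_j(z,w)$ as a Gram-type matrix of inner products in $L^2(\mu_j)$, bound it by Cauchy--Schwarz, and then show the product has norm strictly less than one using $\beta_1(z)\beta_2(z)<1$ from the proof of Lemma \ref{lem:expension}.

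\textbf{Structure of $M_j$.} Since $M_j(z,w)$ has equal diagonal entries $A=A_j(z,w)$ and off-diagonal entries $B,\overline B$, we have $M_j=AI+\begin{pmatrix}0&B\\ \overline B&0\end{pmatrix}$. The second summand is Hermitian with eigenvalues $\pm|B|$, and it commutes with $AI$. Hence the eigenvalues of $M_j$ are $A\pm|B|$, and $M_j$ is normal with
\begin{align*}
\|M_j(z,w)\|=|A|+|B|.
\end{align*}
Note that $A_j(z,w)$ is real, because $f_{j,z}$ is real.

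\textbf{Bounding $|A|+|B|$.} I would apply Cauchy--Schwarz in $L^2(\mu_j)$:
\begin{align*}
|A|\le \|f_{j,z}\|\,\|f_{j,w}\|,\qquad |B|\le \|g_{j,z}\|\,\|g_{j,w}\|.
\end{align*}
A second Cauchy--Schwarz in $\Rb^2$ then gives
\begin{align*}
|A|+|B|\le \bigl(\|f_{j,z}\|^2+\|g_{j,z}\|^2\bigr)^{1/2}\bigl(\|f_{j,w}\|^2+\|g_{j,w}\|^2\bigr)^{1/2}.
\end{align*}
It remains to identify $\|f_{j,z}\|^2+\|g_{j,z}\|^2$. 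This is the $(1,1)$ entry of $\CE(Z_j(z)^*Z_j(z))$, which can be read off from Lemma \ref{lem:ZCZ} with $C=I$ (so $a=b=1$), or from the explicit matrix form of $Z_j$ given there. By Lemma \ref{lem:Z_cexp} this entry equals $\beta_j(z)$. This proves the first inequality,
\begin{align*}
\|M_j(z,w)\|\le\sqrt{\beta_j(z)\beta_j(w)}.
\end{align*}
As a check, the identity can also be verified directly: $f^2+g^2=\bigl(\rho^2k_j^2+\rho^2|c_j|^2t^2\bigr)/(k_j^2+|c_j|^2t^2)^2-2\rho k_j/(k_j^2+|c_j|^2t^2)+1$, whose integral is $\rho^2/(\rho k_j)-2+1=\beta_j$.

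\textbf{The product.} Submultiplicativity and the first inequality give
\begin{align*}
\|M_1M_2\|\le\sqrt{\beta_1(z)\beta_2(z)}\,\sqrt{\beta_1(w)\beta_2(w)}.
\end{align*}
In the proof of Lemma \ref{lem:expension} we showed $\beta_1\beta_2=1-(k_1+k_2-1)/(k_1k_2)<1$, using $k_1+k_2-1=\rho>0$. So each factor is strictly less than one, and therefore $\|M_1M_2\|<1$. One must also note $\beta_j\ge 0$, which holds since $\beta_j$ is the $(1,1)$ entry of a positive conditional expectation (equivalently $k_j\le\rho$); this makes the square roots meaningful.

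The main obstacle is justifying the identity $\|f_{j,z}\|^2+\|g_{j,z}\|^2=\beta_j(z)$ without sign or conjugation slips. I would take this directly from Lemma \ref{lem:Z_cexp} together with the entry formula in Lemma \ref{lem:ZCZ}. The degenerate case $c_j=0$ needs no separate treatment: there $f=g=0$ and $\beta_j=0$, so the bound holds trivially.
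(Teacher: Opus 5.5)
Your proposal is correct and follows the paper's proof essentially step for step: the eigenvalues $A_j \pm |B_j|$ give $\|M_j\| = |A_j| + |B_j|$, two applications of Cauchy--Schwarz combined with the identity $\int (f_{j,z}^2 + |g_{j,z}|^2)\,d\mu_j = \beta_j(z)$ (from Lemmas \ref{lem:Z_cexp} and \ref{lem:ZCZ}) yield the first bound, and submultiplicativity together with $\beta_1\beta_2 < 1$ yields the second. Your extra remarks are all correct and fill in details the paper leaves implicit: that $A_j$ is real so $M_j$ is Hermitian, that $\beta_j \ge 0$, and the direct verification of the identity.
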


\begin{proof}
For fixed $z, w$, we suppress the arguments $z, w$ in notations $M_j(z,w)$, $A_j(z, w)$, and $B(z, w)$.

Since the eigenvalues of $M_j$ are $A_j \pm |B_j|$, we have 
\begin{align*}
\|M_j\|=|A_j|+|B_j| &\leq
\left(\int f_{j,z}(t)^2 d\mu_j(t)\right)^{1/2} \left(\int f_{j,w}(t)^2 d\mu_j(t)\right)^{1/2}\\
&\quad +
\left(\int|g_{j,z}(t)|^2 d\mu_j(t) \right)^{1/2} \left(\int|g_{j,w}(t)|^2 d\mu_j(t)\right)^{1/2}\\
&\leq
\left(\int(f_{j,z}(t)^2+|g_{j,z}(t)|^2) d\mu_j\right)^{1/2} \left(\int(f_{j,w}(t)^2+|g_{j,w}(t)|^2) d\mu_j\right)^{1/2}\\
&=\sqrt{\beta_j(z)\beta_j(w)}.
\end{align*}
The last equality follows from the identity
\begin{equation*}
    \beta_j(z) = \int \bigl( f_{j,z}(t)^2 + |g_{j,z}(t)|^2 \bigr) \, d\mu_j,
\end{equation*}
which is obtained by combining Lemmas \ref{lem:Z_cexp} and \ref{lem:ZCZ}. Hence
\begin{align*}
\|M_1 M_2\| \leq \sqrt{\beta_1(z)\beta_2(z)\beta_1(w)\beta_2(w)} <1.
\end{align*}
\end{proof}

Having assembled the necessary ingredients, we are now ready to prove Theorem \ref{thm:gen_formula}.

\begin{proof}[\textbf{The proof of Theorem \ref{thm:gen_formula}}:]
By Lemmas \ref{lem:d_for} ans \ref{lem:krho}, it suffices to show that
\begin{align*}
   \re \tau_2 \bigl (R(z)^*R(w)\bigr ) =  \frac{\bv^t \Sigma(z,w)\bv}{2\rho(z)\rho(w)}, \qquad \forall z, w \in \Cb.
\end{align*}
To simplify notation, we write $M_j(z,w)$ simply as $M_j$ throughout the proof. 

By Lemmas \ref{lem:Zprod_orth} and \ref{lem:ZCZ}, the elements
\begin{align*}
  Z_{j_1}(z) \cdots Z_{j_m}(z) \quad\text{and}\quad Z_{l_1}(w) \cdots Z_{l_n}(w)
\end{align*}
are orthogonal as vectors in $L^2(\Clm)$ unless $m = n$ and $j_k = l_k$ for all $k = 1, \ldots, n$. And 
\begin{align*}
   \tau_2 \bigl (Z_{j_n}(z)^* \cdots Z_{j_1}(z)^* Z_{j_1}(w) \cdots Z_{j_n}(w) \bigr ) = \frac{1}{2}\mathbf{v}^t \bigl (M_{j_n} \cdots M_{j_1} \bigr ) \mathbf{v}
\end{align*}
Since there are exactly two alternating strings of each positive length, combining the expansion of $R(z)$ from Lemma \ref{lem:expension} with Lemma~\ref{lem:MM_leq_one} yields
\begin{align*}
\tau_2(R(z)^*R(w)) 
&= \frac{1}{2\rho(z)\rho(w)} \bv^t \left( I + \sum_{n=1}^\infty \sum_{\substack{j_1,\dots,j_n \in \{1,2\}\\ j_k \neq j_{k+1}}} M_{j_n} \cdots M_{j_1} \right) \bv \\
&= \frac{1}{2\rho(z)\rho(w)} \bv^t (I + M_1) \left( \sum_{m=0}^\infty (M_2 M_1)^m \right) (I + M_2) \bv \\
&= \frac{1}{2\rho(z)\rho(w)} \bv^t (I + M_1)(I - M_2 M_1)^{-1}(I + M_2) \bv.
\end{align*}
Moreover, because the adjoint $(M_{j_n} \cdots M_{j_1})^* = M_{j_1} \cdots M_{j_n}$ appears in the summation whenever $M_{j_n} \cdots M_{j_1}$ does, the matrix
\begin{align*}
    \Sigma(z,w) = (I + M_1)(I - M_2 M_1)^{-1}(I + M_2)
\end{align*}
is self-adjoint. Consequently,
\begin{align*}
\re \tau_2\bigl(R(z)^*R(w)\bigr) = \frac{\bv^t \Sigma(z,w)\bv}{2\rho(z)\rho(w)} \in \Rb.
\end{align*}
\end{proof}

\section{Examples}
As an application of Theorem \ref{thm:gen_formula}, we derive the distance formula in two extreme cases.

\subsection{The scalar case $K_1= aI$, $K_2 = bI$} 
In this subsection, we assume that $K_1 = aI$, $K_2 = bI$, where $0 < a, b < 1$.

\begin{lem}\label{lem:abkrho}
With the above notations. 
\begin{align}\label{equ:abkrho_I}
 \rho(z) &=\left[ 1+\frac{2a}{1-a}|1+z|^2+\frac{2b}{1-b}|z|^2 +\left(\frac{a}{1-a}|1+z|^2 -\frac{b}{1-b}|z|^2\right)^2 \right]^{1/2},
\end{align}
and
\begin{equation}\label{equ:abkrho_II}
    \begin{aligned}
    k_1(z) = \frac{1}{2} \left ( 1+ \rho(z) - \frac{a}{1-a}|z+1|^2 + \frac{b}{1-b}|z|^2 \right ),\\
    k_2(z) = \frac{1}{2} \left ( 1+ \rho(z) + \frac{a}{1-a}|z+1|^2 - \frac{b}{1-b}|z|^2\right ).        
    \end{aligned}
\end{equation}
\end{lem}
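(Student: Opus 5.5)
In the scalar case $K_1 = aI$ and $K_2 = bI$, so $Y_1 = \sqrt{a/(1-a)}\,I$ and $Y_2 = \sqrt{b/(1-b)}\,I$. Hence $\mu_1 = \delta_{y_1}$ and $\mu_2 = \delta_{y_2}$ are point masses, with $y_1^2 = \frac{a}{1-a}$ and $y_2^2 = \frac{b}{1-b}$. Write $p := |c_1(z)|^2 y_1^2 = \frac{a}{1-a}|1+z|^2$ and $q := |c_2(z)|^2 y_2^2 = \frac{b}{1-b}|z|^2$. By the uniqueness in Lemma \ref{lem:krho}, it is enough to exhibit positive numbers $k_1, k_2, \rho$ with $k_j \ge 1$ and $k_1 + k_2 = 1 + \rho$ satisfying the integral equations. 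Since the measures are point masses, these equations collapse to the algebraic system
\begin{align*}
\frac{k_1}{k_1^2 + p} = \frac{1}{\rho}, \qquad \frac{k_2}{k_2^2 + q} = \frac{1}{\rho}, \qquad k_1 + k_2 = 1 + \rho,
\end{align*}
that is, $k_1^2 + p = \rho k_1$ and $k_2^2 + q = \rho k_2$.

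I would solve the system as follows. Subtracting the two quadratic relations gives $(k_1 - k_2)(k_1 + k_2) + p - q = \rho (k_1 - k_2)$. Substituting $k_1 + k_2 = 1 + \rho$ yields $k_1 - k_2 = q - p$. Together with $k_1 + k_2 = 1 + \rho$, this gives exactly the expressions \eqref{equ:abkrho_II}, namely
\begin{align*}
k_1 = \tfrac12(1 + \rho - p + q), \qquad k_2 = \tfrac12(1 + \rho + p - q).
\end{align*}
Inserting these into $k_1^2 + p = \rho k_1$ and using $k_1(\rho - k_1) = k_1(k_2 - 1) = p$ gives
\begin{align*}
(1 + \rho - p + q)(\rho - 1 - q + p) = 4p,
\end{align*}
which is $\rho^2 - (1 + q - p)^2 = 4p$. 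Expanding, $\rho^2 = 1 + 2p + 2q + (p - q)^2$, which is \eqref{equ:abkrho_I} with the positive square root.

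It remains to check the side conditions, so that uniqueness identifies these values with the ones from Lemma \ref{lem:krho}. Clearly $\rho \ge 1$. The condition $k_1 \ge 1$ is equivalent to $\rho \ge 1 + p - q$. When the right side is nonnegative, squaring reduces this to $1 + 2p + 2q + (p-q)^2 \ge (1 + p - q)^2$, i.e. $4q \ge 0$, which holds. The same argument with $p$ and $q$ interchanged gives $k_2 \ge 1$. Both $k_j$ are then positive, so uniqueness applies. In the degenerate cases $c_j(z) = 0$ the formula gives $k_j = \rho$, consistent with Lemma \ref{lem:krho}.

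The only real subtlety is the choice of sign of the square root together with the verification $k_j \ge 1$, since that is what lets uniqueness apply. Everything else is routine algebra.
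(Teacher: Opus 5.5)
Your proposal is correct and follows the paper's proof: you subtract the two quadratic relations to get $k_1-k_2=q-p$, solve for $k_1,k_2$ using $k_1+k_2=1+\rho$, and substitute back to obtain $\rho^2=1+2p+2q+(p-q)^2$. Your extra check of $k_j\ge 1$ so that uniqueness applies is harmless but not needed. The derivation already shows that the solution whose existence is guaranteed by Lemma \ref{lem:krho} must have this form, and $\rho>0$ fixes the sign of the square root.
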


\begin{proof}
For fixed $z \in \Cb$, we suppress the argument $z$ in notations such as $k_j(z)$ and $\rho(z)$. Let
\begin{align*}
    p_1 := \frac{a}{1-a}|z+1|^2, \qquad p_2 := \frac{b}{1-b}|z|^2. 
\end{align*}

By Lemma \ref{lem:krho}, we have
\begin{align*}
\frac{k_1}{k_1^2+ p_1} = \frac{k_2}{k_2^2+ p_2} = \frac{1}{\rho}, \qquad k_1+k_2 = 1 +\rho.
\end{align*}
This implies that
\begin{align*}
    k_2 - k_1 = p_1 - p_2.
\end{align*}
Hence
\begin{align*}
   k_1 = \frac{1}{2}(1 + \rho - p_1 + p_2), \qquad k_2 = \frac{1}{2}(1 + \rho + p_1 -p_2) 
\end{align*}
Substituting the expression of $k_j$ into $\frac{k_j}{k_j^2+ p_j} = \frac{1}{\rho}$ and solving for $\rho$, we obtain
\begin{align*}
   \rho =   \sqrt{1 + 2p_1 + 2p_2 + (p_1-p_2)^2}.
\end{align*}    
\end{proof}

From the relations
\begin{align*}
    \rho(z)k_1(z) = k_1(z)^2 + \frac{a}{1-a}|1+z|^2, \quad \rho(z)k_2(z) = k_2(z)^2 + \frac{b}{1-b}|z|^2,
\end{align*}
a direct computation yields
\begin{align*}
M_1(z,w) &= \frac{a}{(1-a)k_1(z)k_1(w)}
\begin{pmatrix}
0 & (1+z)(1+\overline{w})\\
(1+\overline{z})(1+w) & 0
\end{pmatrix}, \\  
M_2(z,w) &= \frac{b}{(1-b)k_2(z)k_2(w)}
\begin{pmatrix}
0 & z\overline{w}\\
\overline{z} w & 0
\end{pmatrix}.
\end{align*}

\begin{prop}
With above assumption. Let 
\begin{align*}
   O_1(z,w) := \frac{a(1+z)(1+\overline{w})}{(1-a)k_1(z)k_1(w)}, \qquad  
   O_2(z,w) := \frac{b z\overline{w}}{(1-b)k_2(z)k_2(w)}.  
\end{align*}
Then
\begin{align*}
d(z,w) = 
\begin{cases}
    \left( \frac1{\rho(z)}+\frac1{\rho(w)}
-\frac{2}{\rho(z)\rho(w)}
\operatorname{Re}\left(
\frac{(1+\overline{O_1(z,w)})(1+O_2(z,w))}
{1-\overline{O_1(z,w)}O_2(z,w)}\right) \right)^{1/2} & z, w \in \Cb;\\  
      \frac{1}{\sqrt{\rho(z)}} & z \in \Cb, w=\infty,\\ 
\end{cases}
\end{align*}
where $k_1(z)$, $k_2(z)$, and $\rho(z)$ are given by \eqref{equ:abkrho_I} and \eqref{equ:abkrho_II}.
\end{prop}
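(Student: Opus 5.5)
The plan is to specialize Theorem \ref{thm:gen_formula} to the scalar case. That theorem assumes $K_1,K_2,V_1,V_2$ are free. Scalars are free from everything, so only the freeness of $V_1$ and $V_2$ is needed, and that is already part of the standing setting. The case $w=\infty$ follows at once from Theorem \ref{thm:gen_formula}, together with $\rho(z)$ from Lemma \ref{lem:abkrho}. So the work lies in the case $z,w\in\Cb$, where we must show
\begin{align*}
\bv^t\Sigma(z,w)\bv = 2\re\frac{(1+\overline{O_1})(1+O_2)}{1-\overline{O_1}O_2}.
\end{align*}

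\textbf{Step 1: the matrices $M_j$.} In the scalar case $\mu_1=\delta_{t_1}$ with $t_1^2=a/(1-a)$, and similarly for $\mu_2$. By Lemma \ref{lem:krho}, $\rho k_j = k_j^2+|c_j|^2t_j^2$. It follows that $f_{j,z}(t_j)=0$, so $A_j=0$. We also get $g_{j,z}(t_j)=c_j(z)t_j/k_j(z)$. Hence $B_1=O_1$ and $B_2=O_2$, where $c_2(z)\overline{c_2(w)}=z\overline w$. This recovers the displayed formulas
\begin{align*}
M_1=\begin{pmatrix}0&O_1\\ \overline{O_1}&0\end{pmatrix}, \qquad M_2=\begin{pmatrix}0&O_2\\ \overline{O_2}&0\end{pmatrix}.
\end{align*}

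\textbf{Step 2: the quadratic form.} The identity
\begin{align*}
M_2M_1=\diag\bigl(O_2\overline{O_1},\,\overline{O_2}O_1\bigr)
\end{align*}
shows that $M_2M_1$ is diagonal, and Lemma \ref{lem:MM_leq_one} guarantees $|O_1O_2|<1$. The cleanest route is to use the series form from the proof of Theorem \ref{thm:gen_formula}. The sum of $\bv^t M_{j_n}\cdots M_{j_1}\bv$ over alternating words can be evaluated word by word, because every product of the $M_j$ is either diagonal or anti-diagonal with explicit entries. Alternatively, one can compute $(I+M_1)(I-M_2M_1)^{-1}(I+M_2)\bv$ directly. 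Its first coordinate is
\begin{align*}
\frac{(1+O_2)+O_1(1+\overline{O_2})\cdot\frac{1-O_2\overline{O_1}}{1-\overline{O_2}O_1}\cdots}{}
\end{align*}
This gets messy, so I would instead take the following shortcut.

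\textbf{Step 3: the conjugate-vector shortcut.} Every $M_j$ has the form $m\mapsto (O\,\overline{m_2},\ \overline O\,m_1)$ applied to suitable vectors. Applied to $\bv$-type vectors $(u,\overline u)^t$, it maps $(u,\overline u)^t\mapsto (O\overline u,\overline{O\overline u})^t$. So the whole computation stays in the real subspace $\{(u,\overline u)\}$, on which $\bv^t(u,\overline u)^t=2\re u$. Starting from $u=1$:
\begin{itemize}
\item the first nontrivial step gives $(I+M_2)$ yielding $u=1+\overline{O_2}$;
\item alternating applications give a geometric series in $O_1\overline{O_2}$ (or its conjugate);
\item the final $(I+M_1)$ yields $2\re\bigl[(1+\overline{O_2})/(1-O_1\overline{O_2})\cdot(\ldots)\bigr]$.
\end{itemize}
After conjugating inside $\re$, this matches the claimed expression $2\re\frac{(1+\overline{O_1})(1+O_2)}{1-\overline{O_1}O_2}$.

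\textbf{Main obstacle.} The main obstacle is this bookkeeping. The maps are conjugate-linear, so the geometric series must be tracked carefully, and the cross terms must be shown to combine into exactly this closed form. This comes down to summing
\begin{align*}
\sum (\overline{O_1}O_2)^m\,(1+\overline{O_1})(1+O_2)
\end{align*}
and checking that the odd-length words supply the $\overline{O_1}$ and $O_2$ factors. Substituting into Theorem \ref{thm:gen_formula} then finishes the proof.
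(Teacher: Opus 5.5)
Your proposal is correct and follows the paper's route: you specialize Theorem \ref{thm:gen_formula}, observe that $M_1,M_2$ are antidiagonal with entries $O_j,\overline{O_j}$ (your Step 1 is exactly the paper's ``direct computation''), so that $M_2M_1=\diag(\overline{O_1}O_2,\,O_1\overline{O_2})$, and then evaluate $\bv^t\Sigma\bv$. The bookkeeping you worry about takes only a few lines, as in the paper: $(I-M_2M_1)^{-1}(I+M_2)\bv=(u,\overline{u})^t$ with $u=\frac{1+O_2}{1-\overline{O_1}O_2}$, and then $\bv^t(I+M_1)(u,\overline{u})^t=2\re\bigl((1+\overline{O_1})u\bigr)$; note that in your own $(u,\overline{u})$ convention the first step gives $u=1+O_2$, not $1+\overline{O_2}$.
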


\begin{proof}
For fixed $z, w \in \Cb$, we suppress the arguments $z, w$ in notations such as $O_1(z,w)$ and $O_2(z,w)$. Note that 
\begin{align*}
 M_2M_1 &= \diag \bigl(\overline{O_1}O_2, O_1\overline{O_2}\bigr).
\end{align*}
Then 
\begin{align*}
    \bv^t (I+ M_1) (I- M_2M_1)^{-1} (I+M_2) \bv = 
    2\re \left( \frac{(1+\overline{O_1})(1+O_2)}{1-\overline{O_1}O_2}\right),
\end{align*}
where $\bv = (1, 1)^t$. And the proposition is proved by substituting this identity into the distance formula of Theorem \ref{thm:gen_formula}.
\end{proof}

\subsection{The freely independent projection case} In this subsection, we assume that $Q(\infty)$, $Q(0)$, and $Q(-1)$ are freely independent projections with trace $1/2$ in $\Clm$. In particular, we assume that for $i = 1, 2$, each $K_i$ has the same distribution with respect to $\tau$ as $\cos^2(\frac{\pi}{2}\theta)$, where $\theta$ is uniformly distributed on $[0, 1]$, and that $V_1, V_2$ are Haar unitaries (see, for example, \cite[Section 3]{Y17}). \\

For any bounded Borel function $f$ on $(0,\infty)$,
\begin{align*}
\tau(f(Y_j)) &= \int_0^1 f\left(\sqrt{\frac{\cos^2(\frac\pi 2\theta)}{1-\cos^2(\frac\pi 2\theta)}}\right) d\theta \\
&= \int_0^1 f\left(\cot\left(\frac\pi 2\theta\right)\right)\,d\theta 
=\int_0^\infty f(t)\,\frac{2\,dt}{\pi(1+t^2)}. 
\end{align*}
It follows that the densely defined closed positive operator $Y_j$ follows the half-Cauchy law 
\begin{align*}
  d\mu(t) = \frac{2}{\pi(1+t^2)} dt,
\end{align*}
$t \in (0, \infty)$.

The following auxiliary identities involving the half-Cauchy distribution will be used in subsequent computations. For completeness, we briefly sketch their derivations below.

\begin{lem}\label{lem:aux_formula}
For $k>0$ and $c\geq0$,
\begin{align*}
\int \frac{d\mu(t)}{k^2+c^2t^2}&=\frac1{k(k+c)}.
\end{align*}
Moreover,
\begin{align*}
\int \frac{t^2\,d\mu(t)}{(k^2+c^2t^2)(l^2+d^2t^2)} &=\frac1{(k+c)(l+d)(cl+dk)},\\
\int \frac{d\mu(t)}{(k^2+c^2t^2)(l^2+d^2t^2)} &=\frac{cl+dk+cd}{kl(k+c)(l+d)(cl+dk)},
\end{align*}
for every $c,d,k,l>0$. 
\end{lem}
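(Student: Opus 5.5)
We argue by elementary calculus, reducing everything to the single integral
\begin{align*}
F(k,c) := \int_0^\infty \frac{2\,dt}{\pi(1+t^2)(k^2+c^2t^2)}.
\end{align*}
For the first identity, assume first $c\neq k$ and use the partial fraction decomposition
\begin{align*}
\frac{1}{(1+t^2)(k^2+c^2t^2)} = \frac{1}{k^2-c^2}\left(\frac{1}{1+t^2} - \frac{c^2}{k^2+c^2t^2}\right).
\end{align*}
The two pieces integrate over $(0,\infty)$ to $\pi/2$ and $\pi c/(2k)$, because $\int_0^\infty \frac{dt}{k^2+c^2t^2} = \frac{\pi}{2kc}$. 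Hence
\begin{align*}
F(k,c) = \frac{1}{k^2-c^2}\left(1-\frac{c}{k}\right) = \frac{1}{k(k+c)}.
\end{align*}
The case $c=k$ follows by continuity in $c$ (dominated convergence). The case $c=0$ is immediate, since $\mu$ is a probability measure.

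For the two mixed integrals, again assume first $c/d \neq k/l$, i.e. $cl \neq dk$, and split via
\begin{align*}
\frac{1}{(k^2+c^2t^2)(l^2+d^2t^2)} = \frac{1}{c^2l^2-d^2k^2}\left(\frac{c^2}{k^2+c^2t^2}-\frac{d^2}{l^2+d^2t^2}\right).
\end{align*}
Applying the first identity to each term gives
\begin{align*}
\int\frac{d\mu}{(k^2+c^2t^2)(l^2+d^2t^2)} = \frac{1}{c^2l^2-d^2k^2}\left(\frac{c^2}{k(k+c)}-\frac{d^2}{l(l+d)}\right).
\end{align*}
The numerator, over the common denominator $kl(k+c)(l+d)$, is
\begin{align*}
c^2l(l+d)-d^2k(k+c) = (cl-dk)(cl+dk) + cd(cl-dk) = (cl-dk)(cl+dk+cd).
\end{align*}
Dividing by $c^2l^2-d^2k^2=(cl-dk)(cl+dk)$ yields the stated third formula.

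For the $t^2$ integral, the same decomposition, written as
\begin{align*}
\frac{t^2}{(k^2+c^2t^2)(l^2+d^2t^2)} = \frac{1}{c^2l^2-d^2k^2}\left(\frac{l^2}{l^2+d^2t^2}-\frac{k^2}{k^2+c^2t^2}\right),
\end{align*}
gives
\begin{align*}
\frac{1}{c^2l^2-d^2k^2}\left(\frac{l}{l+d}-\frac{k}{k+c}\right) = \frac{cl-dk}{(k+c)(l+d)(cl-dk)(cl+dk)} = \frac{1}{(k+c)(l+d)(cl+dk)}.
\end{align*}

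The degenerate case $cl=dk$ follows by continuity of both sides in $d$. The integrands are dominated uniformly for $d$ in a compact subset of $(0,\infty)$, and the right-hand sides are continuous there.

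None of these steps is a genuine obstacle. The only point requiring care is the removable singularity when the partial-fraction denominators $k^2-c^2$ or $c^2l^2-d^2k^2$ vanish, which is handled by the continuity argument.
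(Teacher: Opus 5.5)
Your proof is correct and follows the paper's approach: a partial-fraction reduction to the first identity, with continuity arguments at $k=c$ and $cl=dk$. The one difference is the third identity. You obtain it by a direct partial-fraction split and the factorization $c^2l(l+d)-d^2k(k+c)=(cl-dk)(cl+dk+cd)$, whereas the paper deduces it from the first two identities via $\frac{1}{(k^2+c^2t^2)(l^2+d^2t^2)}=\frac{1}{k^2}\bigl(\frac{1}{l^2+d^2t^2}-\frac{c^2t^2}{(k^2+c^2t^2)(l^2+d^2t^2)}\bigr)$, which avoids a separate degenerate-case argument for that formula.
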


\begin{proof}
For $c = 0$, the first equation holds immediately. Assume that $k \neq c$, we have 
\begin{align*}
\int \frac{d\mu(t)}{k^2+c^2t^2} = \frac{2}{\pi(k^2-c^2)} \left( \int \frac{dt}{1+t^2} - \int \frac{c^2\,dt}{k^2+c^2t^2} \right) = \frac{1}{k(k+c)}.
\end{align*}
By the Dominated Convergence Theorem, continuity, the result also holds for $k = c$.

To prove the second equality, assume first that $cl \neq dk$. Applying the first formula, we have 
\begin{align*}
\int \frac{t^2\,d\mu(t)}{(k^2+c^2t^2)(l^2+d^2t^2)} 
&= \int \frac{1}{c^2l^2 - d^2k^2}\left( \frac{l^2}{l^2+d^2t^2} - \frac{k^2}{k^2+c^2t^2} \right) d\mu(t)\\
&= \frac{1}{(k+c)(l+d)(cl+dk)}.
\end{align*}
The identity extends to $cl = dk$ by continuity.

Finally, using the first two identities, 
\begin{align*}
\int \frac{d\mu(t)}{(k^2+c^2t^2)(l^2+d^2t^2)} 
&= \int \frac{1}{k^2}\left( \frac{1}{l^2+d^2t^2} - \frac{c^2t^2}{(k^2+c^2t^2)(l^2+d^2t^2)} \right) d \mu(t)\\
&= \frac{1}{k^2}\left( \frac{1}{l(l+d)} - \frac{c^2}{(k+c)(l+d)(cl+dk)} \right) \\
&=\frac{(cl+dk+cd)}{kl(k+c)(l+d)(cl+dk)}. 
\end{align*}
\end{proof}

By Lemma \ref{lem:aux_formula}, the functions
\begin{align*}
    k_1(z) = 1 + |z|, \quad k_2(z) = 1 + |z+1|, \quad, \rho(z) = 1 + |z| + |z+1|
\end{align*}
satisfy the conditions of Lemma \ref{lem:krho}. And for every $z, w \in \Cb$,  
\begin{align*}
 M_j(z,w) = 
\begin{cases}
0 & c_j(z) = c_j(w) = 0\\
 \frac{1}{l_j(z,w)}
\begin{pmatrix}
|c_j(z)||c_j(w)|&c_j(z)\overline{c_j(w)}\\
\overline{c_j(z)}c_j(w)&|c_j(z)||c_j(w)|
\end{pmatrix}   &  \mbox{otherwise}\\
\end{cases}.
\end{align*}
where 
\begin{align*}
l_j(z,w) = |c_j(z)|k_j(w)+|c_j(w)|k_j(z),
\end{align*}

Note that whenever $c_j(z)c_j(w) \neq 0$, $M_{j}(z, w)$ is a scalar multiple of a rank-one projection. Therefore, 
\begin{equation}\label{equ:RzRw_I}
    \begin{aligned}
        & M_1(z,w) M_2(z,w) M_1(z,w) = \kappa(z,w) M_1(z,w),\\
        & M_2(z,w) M_1(z,w) M_2(z,w)=\kappa(z,w) M_2(z,w),
    \end{aligned}
\end{equation}
where 
\begin{align*}
    \kappa(z,w) : = 2\tr \bigl (M_1(z,w)M_2(z,w) \bigr ).
\end{align*}

\begin{lem}\label{lem:kappa_expand}
For distinct $z, w \in \Cb$, we have 
\begin{align*}
  \kappa(z,w) = \frac{\bigl(|z||w+1| + |w||z+1|\bigr)^2 - |z-w|^2}{l_1(z,w)l_2(z,w)}.
\end{align*}
\end{lem}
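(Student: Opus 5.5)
The plan is to compute $\kappa(z,w)=2\tr\bigl(M_1(z,w)M_2(z,w)\bigr)$ directly from the explicit rank-one form of $M_j(z,w)$ obtained just before the lemma. The only substantive step is a polarization identity, which turns the real-part cross term into the $|z-w|^2$ appearing in the statement.

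\emph{Step 1: the degenerate case cannot occur.} Since $c_1(z)=1+z$ and $c_2(z)=-z$ each vanish at exactly one point of $\Cb$, distinctness of $z$ and $w$ rules out $c_j(z)=c_j(w)=0$. So we are always in the ``otherwise'' branch of the formula for $M_j(z,w)$. Moreover $l_j(z,w)=|c_j(z)|k_j(w)+|c_j(w)|k_j(z)>0$, because $k_j\geq 1$ and at least one of $c_j(z)$, $c_j(w)$ is nonzero. If exactly one of them vanishes, the matrix formula simply gives entries equal to $0$, and the computation below still applies.

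\emph{Step 2: the trace.} Write
\begin{align*}
M_j=\frac{1}{l_j}\begin{pmatrix}\alpha_j&\gamma_j\\ \overline{\gamma_j}&\alpha_j\end{pmatrix},\qquad \alpha_j=|c_j(z)||c_j(w)|,\quad \gamma_j=c_j(z)\overline{c_j(w)}.
\end{align*}
Since $\tr$ is the normalized trace, $2\tr(M_1M_2)$ is the ordinary matrix trace of $M_1M_2$. This equals
\begin{align*}
\frac{2\alpha_1\alpha_2+\gamma_1\overline{\gamma_2}+\overline{\gamma_1}\gamma_2}{l_1l_2}
=\frac{2|z||w||z+1||w+1|+2\re\bigl((1+z)(1+\overline{w})\,\overline{z}w\bigr)}{l_1l_2},
\end{align*}
using $\gamma_1=(1+z)(1+\overline w)$ and $\gamma_2=(-z)(-\overline w)=z\overline w$.

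\emph{Step 3: polarization.} The key observation is that $z(w+1)-w(z+1)=z-w$. Expanding $|z(w+1)-w(z+1)|^2$ therefore gives
\begin{align*}
|z-w|^2=|z|^2|w+1|^2+|w|^2|z+1|^2-2\re\bigl(z(w+1)\overline{w}(\overline{z}+1)\bigr).
\end{align*}
Taking complex conjugates inside the real part shows that this cross term equals $2\re\bigl((1+z)(1+\overline w)\overline z w\bigr)$. Substituting into the numerator from Step 2 gives
\begin{align*}
|z|^2|w+1|^2+|w|^2|z+1|^2+2|z||w+1||w||z+1|-|z-w|^2=\bigl(|z||w+1|+|w||z+1|\bigr)^2-|z-w|^2,
\end{align*}
which is the claimed formula for $\kappa(z,w)$.

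The main obstacle is recognizing this polarization trick. Once the cross term is identified as coming from the difference $z(w+1)-w(z+1)=z-w$, everything else is routine bookkeeping with $2\times 2$ matrices.
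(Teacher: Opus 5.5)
Your proof is correct and follows essentially the same route as the paper: you compute the trace of $M_1M_2$ from the explicit rank-one form, then rewrite the cross term using $|(1+z)w-(1+w)z|^2=|z-w|^2$. Your extra check that distinctness of $z,w$ excludes the degenerate branch and forces $l_1l_2>0$ is a small detail the paper leaves implicit.
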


\begin{proof}
Direct computation yields
 \begin{align*}
2\tr \bigl (M_1(z,w)M_2(z,w) \bigr ) 
= 
\frac{2|z||w||z+1||w+1| + 2 \re \bigl((1+z)(1+\bar{w})\bar{z}w\bigr)}{l_1(z,w)l_2(z,w)}.
\end{align*}   
Since $|(1+z)w - (1+w)z|^2 = |z-w|^2$, it follows that
\begin{align*}
2\re \bigl((1+z)(1+\bar{w})\bar{z}w\bigr) = |z|^2|w+1|^2 + |w|^2|z+1|^2 - |z-w|^2,
\end{align*}
which proves the identity in the lemma.
\end{proof}

\begin{lem}\label{lem:RzRw}
For $z, w \in \Cb$, $\tau_2 \bigl (R(z)^*R(w) \bigr )$ equals
\begin{align*}
 \frac1{2\rho(z)\rho(w)}\,
 \mathbf{v}^t\left[I+
 \frac{M_1(z, w) + M_2(z, w) + M_1(z, w)M_2(z, w) + M_2(z, w)M_1(z, w)}{1- \kappa(z,w)}\right]\mathbf{v},
\end{align*}
where $\mathbf{v}=(1,1)^{t}$. 
\end{lem}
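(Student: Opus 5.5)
From the proof of Theorem \ref{thm:gen_formula} we already have the series form
\begin{align*}
\tau_2\bigl(R(z)^*R(w)\bigr) = \frac{1}{2\rho(z)\rho(w)}\,\bv^t\Bigl(I+\sum_{n\ge1}\sum_{\text{alternating}} M_{j_n}\cdots M_{j_1}\Bigr)\bv .
\end{align*}
It converges absolutely because $\|M_1M_2\|<1$ by Lemma \ref{lem:MM_leq_one}. The plan is to collapse this series using the relations \eqref{equ:RzRw_I} instead of evaluating the resolvent $(I-M_2M_1)^{-1}$. Write $M_j=M_j(z,w)$ and $\kappa=\kappa(z,w)$.

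\textbf{Reducing the words.} Every alternating word of length $n$ falls into one of four families:
\begin{align*}
M_1,\ M_1M_2M_1,\ \dots &\quad\text{(begins and ends with $M_1$, odd length)},\\
M_2,\ M_2M_1M_2,\ \dots &\quad\text{(begins and ends with $M_2$, odd length)},\\
M_1M_2,\ M_1M_2M_1M_2,\ \dots &\quad\text{(begins with $M_1$, even length)},\\
M_2M_1,\ M_2M_1M_2M_1,\ \dots &\quad\text{(begins with $M_2$, even length)}.
\end{align*}
Applying \eqref{equ:RzRw_I} repeatedly gives, for every $m\ge0$,
\begin{align*}
(M_1M_2)^mM_1=\kappa^mM_1,\qquad (M_2M_1)^mM_2=\kappa^mM_2,
\end{align*}
and, for $m\ge1$,
\begin{align*}
(M_1M_2)^m=\kappa^{m-1}M_1M_2,\qquad (M_2M_1)^m=\kappa^{m-1}M_2M_1 .
\end{align*}
Summing the four geometric series gives
\begin{align*}
\sum_{\text{alternating}} = \frac{M_1+M_2+M_1M_2+M_2M_1}{1-\kappa},
\end{align*}
which is exactly the claimed formula.

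\textbf{Checking $|\kappa|<1$.} Since $\kappa=2\tr(M_1M_2)$ is real, it is enough to bound its absolute value. When both $M_j$ are nonzero, each $M_j$ is a positive multiple of a rank-one projection. Then $M_1M_2M_1=\kappa M_1$ means $\kappa$ is the nonzero eigenvalue of $M_1M_2$, up to the relevant normalization. Hence $|\kappa|\le\|M_1M_2\|<1$ by Lemma \ref{lem:MM_leq_one}. A more direct route is to note that $(M_1M_2)^2=\kappa M_1M_2$, so $\kappa^m\|M_1M_2\|=\|(M_1M_2)^{m+1}\|\le\|M_1M_2\|^{m+1}$, which forces $|\kappa|\le\|M_1M_2\|<1$ whenever $M_1M_2\ne0$.

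\textbf{Degenerate cases.} These are the cases where some $c_j(z)c_j(w)=0$. If one of the $M_j$ is zero, then $\kappa=0$ and all words of length at least $2$ vanish, so the formula still holds. If $M_1$ and $M_2$ are both nonzero but $M_1M_2=0$, then $\kappa=2\tr(M_1M_2)=0$ and \eqref{equ:RzRw_I} holds trivially. In that case the series reduces to $I+M_1+M_2+M_2M_1$, and the formula holds as well. The remaining case is $c_j(z)c_j(w)=0$ with $M_j\neq0$ under the ``otherwise'' branch. Then $M_j$ is still of rank at most one and positive semidefinite, so the same relations apply.

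\textbf{Main obstacle.} I expect the main obstacle to be justifying \eqref{equ:RzRw_I} and the bound $|\kappa|<1$ uniformly across these edge cases, rather than the algebra itself.
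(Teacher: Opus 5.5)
Your proposal is correct and follows the paper's proof. Starting from the series in the proof of Theorem \ref{thm:gen_formula}, you use \eqref{equ:RzRw_I} to collapse the alternating words into $\kappa^m M_1$, $\kappa^m M_2$, $\kappa^{m-1}M_1M_2$ and $\kappa^{m-1}M_2M_1$, then sum the four geometric series. Your check that $|\kappa|\le\|M_1M_2\|<1$, via $(M_1M_2)^{m+1}=\kappa^m M_1M_2$ and Lemma \ref{lem:MM_leq_one}, fills in a point the paper leaves implicit; one small remark is that in the ``otherwise'' branch with $c_j(z)c_j(w)=0$ one actually has $M_j=0$, so your last degenerate case is vacuous.
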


\begin{proof}
To simplify notation, we write $M_j(z,w)$ and $\kappa(z,w)$ simply as $M_j$ and $\kappa$ throughout the proof.

By the proof of Theorem \ref{thm:gen_formula}, we have
\begin{align}\label{equ:RzRw_II}
   \tau_2(R(z)^*R(w)) &= \frac{1}{2\rho(z)\rho(w)} \bv^t \left( I + \sum_{n=1}^\infty \sum_{\substack{j_1,\dots,j_n \in \{1,2\}\\ j_k \neq j_{k+1}}} M_{j_n} \cdots M_{j_1} \right) \bv 
\end{align} 
Note that there are exactly two alternating strings of each positive length. By equation \eqref{equ:RzRw_I}, we have 
\begin{align*}
M_{j_n}\cdots M_{j_1}
&=\begin{cases}
\kappa^m M_1, & n=2m+1, j_1=1, \quad m\geq0,\\
\kappa^m M_2, &n=2m+1, j_1=2, \quad m\geq0,\\
\kappa^{m-1} M_2M_1, & n=2m, j_1=1,\quad m\geq1,\\
\kappa^{m-1} M_1M_2, & n=2m, j_1=2,\quad m\geq1.
\end{cases}
\end{align*}
Combining this with equation \ref{equ:RzRw_II}, we obtain
\begin{align*}
    \tau_2\bigl ( R(z)^* R(w) \bigr ) = \frac{1}{2\rho(z)\rho(w)}\mathbf{v}^t \left ( I+\frac{M_1+M_2+M_1M_2+M_2M_1}{1-\kappa} \right ) \mathbf{v}.
\end{align*}
\end{proof}

\begin{proof}[\textbf{The proof of Theorem \ref{thm:distance_free}}]
By Lemmas \ref{lem:d_for} and \ref{lem:krho}, it suffices to consider the case where $z, w$ are distinct numbers in $\Cb$. As in the proof of Lemma \ref{lem:RzRw}, we abbreviate $M_j(z,w)$, $l_j(z, w)$, and $\kappa(z,w)$ to $M_j$, $l_j$, and $\kappa$, respectively.

By Theorem \ref{thm:gen_formula} and Lemma \ref{lem:RzRw}, we have 
\begin{align}\label{equ:d_temp}
d(z,w)^2 =\frac1{\rho(z)}+\frac1{\rho(w)}
 -\frac1{\rho(z)\rho(w)}\mathbf v^t
 \left(I+\frac{M_1+M_2+M_1M_2+M_2M_1}{1-\kappa}\right)
 \mathbf v.
\end{align}
Since $z \neq w$, we have 
\begin{align*}
l_1 &=|z+1|+|w+1|+|z||w+1|+|w||z+1| > 0,\\
l_2 &=|z|+|w|+|z||w+1|+|w||z+1| > 0.
\end{align*}
Direct computation yields
\begin{align*}
&\mathbf{v}^t M_1 \mathbf{v} =\frac{2\bigl (|z+1||w+1|+\re((1+z)(1+\overline w)) \bigr)}{l_1}, \quad 
\mathbf{v}^t M_2 \mathbf{v} =\frac{2 \bigl (|z||w|+\re(z\overline w) \bigr )}{l_2},\\
&\mathbf{v}^t(M_1M_2+M_2M_1) \mathbf{v} = 2\kappa+\frac{4|z||w|\re \bigl ((1+z)(1+\overline w) \bigr)}{l_1l_2}+ \frac{4|z+1||w+1|\re(z\overline w)}{l_1l_2}.
\end{align*}

Using the identities
\begin{align*}
2\re(z\overline w) &=|z|^2+|w|^2-|z-w|^2,\\
2\re((1+z)(1+\overline w))
 &= |z+1|^2+|w+1|^2-|z-w|^2,
\end{align*}
and Lemma \ref{lem:kappa_expand}, we have 
\begin{align*}
    \mathbf{v}^{\mathsf{T}} \left( I + \frac{M_1 + M_2 + M_1 M_2 + M_2 M_1}{1 - \kappa} \right) \mathbf{v} 
    &= \frac{2}{l_1 l_2 (1 - \kappa)} \bigl [ l_1 l_2 + |z||w|l_1 + |z+1||w+1|l_2 \\
    &\quad + \bigl( l_1 + 2|z+1||w+1| \bigr) \re(z \overline{w}) \\
    &\quad + \bigl( l_2 + 2|z||w| \bigr) \re\bigl( (1+z)(1+\overline{w}) \bigr) \bigr ] \\
    &=  \rho(z) + \rho(w) - \frac{2|z-w|^2\rho(z)\rho(w)}{l_1l_2 (1- \kappa)}. 
\end{align*}
Substitution into equation \eqref{equ:d_temp}, we have 
\begin{align*}
d(z,w)^2 =\frac{2|z-w|^2}{l_1(z,w)l_2(z,w)(1-\kappa(z,w))}.
\end{align*}
Expanding the product $l_1 l_2 (1-\kappa)$ yields the explicit expression for the denominator:
\begin{align*}
l_1 l_2 (1-\kappa) &= (|z| + |w|)(|z+1| + |w+1|) + |z-w|^2 \\
&\quad + (|z||w+1| + |w||z+1|)(|z| + |w| + |z+1| + |w+1|).
\end{align*}
\end{proof}

\begin{rem}
Note that 
\begin{align*}
    d(z, 0)^2 &= \frac{|z|}{1 + |z| + |z+1|},\\
    d(z, -1)^2 &= \frac{|z+1|}{1 + |z| + |z+1|},\\
    d(z, \infty)^2 &=  \frac{1}{1 + |z| + |z+1|}.
\end{align*}
We obtain
\begin{align*}
 \|Q(z) - Q(0)\|_2^2 + \|Q(z) - Q(-1)\|_2^2 + \|Q(z) - Q(\infty)\|_2^2 = 1, \qquad \forall z \in \widehat{\Cb}.
\end{align*}
Since $\tau_2(Q(j)Q(k)) = \frac{1}{4}$ for distinct $k, j \in \{0, -1, \infty\}$, we have  
\begin{align*}
     \left \|Q(z) - \frac{1}{3} \bigl( Q(0) + Q(-1) + Q(\infty) \bigr ) \right \|_2^2 = \frac{1}{6}, \qquad \forall z \in \widehat{\Cb}.
\end{align*}
Therefore, $\{Q(z) : z \in \widehat{\Cb}\}$ is contained in the sphere of radius $\frac{1}{\sqrt{6}}$ centered at $\frac{1}{3} \big( Q(0) + Q(-1) + Q(\infty) \big)$ in $L^2(\Clm)$.
\end{rem}

For every $z \in \Cb \setminus \{0, -1\}$, 
\begin{align*}
    D(z,z) = 4|z||z+1|\bigl (1 + |z| + |z+1| \bigr) > 0.
\end{align*}
Then, for $\varepsilon \in \Cb$ near $0$, we have 
\begin{align*}
    d(z, z + \varepsilon)^2 = \frac{2|\varepsilon|^2}{D(z, z)} + o(|\varepsilon|^2).
\end{align*}
Thus, writing $z = x + iy$, the distance function $d(z, w)$ determines a conformal Riemannian metric $g$ on $\Cb \setminus \{0, -1\}$ with components
\begin{align}\label{equ:con_met}
    g_{xx} = g_{yy} = \frac{1}{2|z||z+1|(1 + |z| + |z+1|)}, \qquad g_{xy} = g_{yx} = 0.
\end{align}
And the Gaussian curvature 
\begin{align*}
    K_g(z) = 4(|z||z+1|(1 + |z| + |z+1|) \frac{\partial^2}{\partial_{z}\partial_{\overline{z}}} \log \bigl ((1 + |z| + |z+1|) \bigr) = 1.
\end{align*}
Near the singularity $z = 0$, the metric takes the form
\begin{equation*}
    g = \frac{1}{|z|} h(z) |dz|^2,
\end{equation*}
where 
\begin{equation*}
    h(z) = \frac{1}{2|z+1|(1 + |z| + |z+1|)}
\end{equation*}
is continuous and strictly positive near $0$, with $h(0) = \frac{1}{4}$. This shows that $z = 0$ is a conical singularity of cone angle $\pi$. An identical analysis shows that $z = -1$ and $z = \infty$ are also conical singularities of cone angle $\pi$. Therefore, $g$ is the unique conformal metric of constant Gaussian curvature $K = 1$ on the $2$-sphere with conical singularities of angle $\pi$ at $\{0, -1, \infty\}$ (see \cite{E04}; see also \cite{T91, LT92, MP16} for further background on conical metrics on $2$-spheres). More explicitly, let
\begin{align*}
    X(z) &:= \frac{|z| - |z+1| + 1}{1 + |z| + |z+1|}, \\
    Y(z) &:= \frac{|z+1| - |z| + 1}{1 + |z| + |z+1|}, \\
    Z(z) &:= \frac{|z| + |z+1| - 1}{1 + |z| + |z+1|}.
\end{align*}
The map
\begin{equation*}
    z \mapsto \bigl( \sqrt{X(z)}, \sqrt{Y(z)}, \sqrt{Z(z)} \bigr)
\end{equation*}
defines an isometic diffeomorphism from the upper (resp. lower) half-plane onto the intersection of the unit $2$-sphere with the positive open octant $\{(x, y, z) \in \mathbb{R}^3 : x, y, z > 0\}$.


\printbibliography

\end{document}